\numberwithin{equation}{section}
\g@addto@macro\bfseries{\boldmath}
\setlist{nolistsep}
\newcolumntype{L}[1]{>{\raggedright\let\newline\\\arraybackslash\hspace{0pt}}m{#1}}
\newcolumntype{C}[1]{>{\centering\let\newline\\\arraybackslash\hspace{0pt}}m{#1}}
\newcolumntype{R}[1]{>{\raggedleft\let\newline\\\arraybackslash\hspace{0pt}}m{#1}}
\newcolumntype{N}{@{}m{0pt}@{}}
\newcommand{\R}{\mathbb{R}}
\newcommand{\C}{\mathbb{C}}
\newcommand{\Sym}{\mathbb{S}}
\newcommand{\Skw}{\mathbb{A}}
\newcommand{\Mat}{\mathbb{M}}
\newcommand{\mesh}{\mathcal{T}}
\newcommand{\dd}{\,\mathrm{d}}
\newcommand{\T}{\mathsf{T}}
\newcommand{\bdry}{\partial}
\newcommand{\tr}{\mathrm{tr}}
\newcommand{\opt}{\mathrm{opt}}
\newcommand{\enr}{\mathrm{enr}}
\DeclareMathOperator*{\argmin}{arg\,min}
\DeclareMathOperator{\grad}{grad}
\DeclareMathOperator{\curl}{curl}
\let\div\relax 
\DeclareMathOperator{\div}{div}
\newcommand{\superimpose}[2]{%
  {\ooalign{$#1\@firstoftwo#2$\cr\hfil$#1\@secondoftwo#2$\hfil\cr}}}
\newcommand{\superimposecenter}[2]{%
  {\ooalign{$#1\@firstoftwo#2\vphantom{#2}$\cr\hidewidth$#1\@secondoftwo#2$\hidewidth\cr}}}
\newtheoremstyle{boldremark}
    {\dimexpr\topsep/2\relax} 
    {\dimexpr\topsep/2\relax} 
    {}          
    {}          
    {\bfseries} 
    {.}         
    {.5em}      
    {}          
\theoremstyle{definition}
\newtheorem*{definition*}{Definition}
\theoremstyle{plain}
\newtheorem{theorem}{Theorem}
\numberwithin{theorem}{section}
\newtheorem{lemma}{Lemma}
\numberwithin{lemma}{section}
\numberwithin{corollary}{section}
\theoremstyle{boldremark}
\newtheorem{remark}{Remark}
\numberwithin{remark}{section}
\newcommand{\dashto}[1][2pt]{
	\settowidth{\@tempdima}{${}\rightarrow{}$}
	\makebox[\@tempdima]{${}\rightarrow{}$}
  \makebox[-\@tempdima]{\hspace{-0.1\@tempdima}\color{white}\rule[0.5ex]{#1}{1pt}}
  \makebox[\@tempdima]{}
	}
\let\tilde\widetilde
\DeclareRobustCommand{\widefrac}[3][5pt]{%
  \frac{\hspace{#1}#2\hspace{#1}}{\hspace{#1}#3\hspace{#1}}}
\newcommand{\upperone}[2]{\raisebox{-0.2\height}{$#1{}^1\!$}}
\newcommand{\upone}{\mathpalette\upperone\relax}
\newcommand{\lowertwo}[2]{\raisebox{-0.1\height}{$#1{}_{\!2}$}}
\newcommand{\lowtwo}{\mathpalette\lowertwo\relax}
\newcommand{\frachalf}[2]{\raisebox{0.0\height}{$#1{}\upone/\lowtwo$}}
\newcommand{\onehalf}{\mathpalette\frachalf\relax}
\newcommand{\scS}{\mathscr{S}}
\newcommand{\scU}{\mathscr{U}}
\newcommand{\scD}{\mathscr{D}}
\newcommand{\scM}{\mathscr{M}}
\newcommand{\scP}{\mathscr{P}}
\newcommand{\scC}{\mathscr{C}}
\newcommand{\scF}{\mathscr{F}}
\newcommand{\sfC}{\mathsf{C}}
\newcommand{\sfS}{\mathsf{S}}
\newcommand{\fku}{\mathfrak{u}}
\newcommand{\fkv}{\mathfrak{v}}
\newcommand{\pder}[2]{\frac{\partial #1}{\partial #2}}
\newcommand{\tder}[2]{\frac{\mathrm{d} #1}{\mathrm{d} #2}}
\newcommand{\nml}{n}
\begin{document}


\title{\vspace*{-35pt}Coupled variational formulations of linear elasticity and the DPG methodology}
\author[1]{Federico Fuentes}
\author[1]{Brendan Keith}
\author[1]{Leszek Demkowicz}
\author[2]{Patrick Le Tallec}
\affil[1]{The Institute for Computational Engineering and Sciences (ICES), The University of Texas at Austin, 201 E 24th St, Austin, TX 78712, USA}
\affil[2]{Laboratoire de M\'{e}chanique des Solides, \'{E}cole Polytechnique, Route de Saclay, 91128 Palaiseau, France}
\date{}


\maketitle

\vspace{-30pt}
\renewcommand{\abstractname}{\large Abstract}
\begin{abstract}
\small 
This article presents a general approach akin to domain-decomposition methods to solve a single linear PDE, but where each subdomain of a partitioned domain is associated to a distinct variational formulation coming from a mutually well-posed family of \textit{broken} variational formulations of the original PDE.
It can be exploited to solve challenging problems in a variety of physical scenarios where stability or a particular mode of convergence is desired in a part of the domain.
The linear elasticity equations are solved in this work, but the approach can be applied to other equations as well.
The broken variational formulations, which are essentially extensions of more standard formulations, are characterized by the presence of mesh-dependent broken test spaces and interface trial variables at the boundaries of the elements of the mesh.
This allows necessary information to be naturally transmitted between adjacent subdomains, resulting in \textit{coupled} variational formulations which are then proved to be globally well-posed.
They are solved numerically using the DPG methodology, which is especially crafted to produce stable discretizations of broken formulations.
Finally, expected convergence rates are verified in two different and illustrative examples.
\end{abstract}

\section{Introduction}
\label{sec:Introduction}

Many equations arising from physical applications can be given a variational formulation, which can then be analyzed using functional analysis, and solved using a discrete version of the formulation and the finite element method \cite{ciarlet2002finite,HughesFamous}.
In fact, for a fixed set of equations, there may even be multiple variational formulations which are essentially equivalent to the initial equations.
Typically, some of these formulations have significant advantages and disadvantages over the other formulations.
For example, in linear elasticity, the classical formulation coming from the principle of virtual work is well-known to be computationally efficient to solve with the Galerkin method, but is subject to volumetric locking phenomena for nearly incompressible materials \cite{HughesFamous}.
On the other hand, the Hellinger-Reissner mixed formulation is not as efficient, but it does remain robustly well-posed for nearly incompressible materials and produces a locally conservative stress tensor \cite{falk2008,brezzi2012mixed}.

This article studies the scenario where distinct variational formulations are implemented in different subdomains of the same physical domain.
This can be useful in situations where a certain behavior of the equations to be solved is known (or expected) in particular parts of the domain.
Hence, in each region one can choose a variational formulation which is well-suited to the expected behavior.
For example, consider a material with heterogeneous material properties varying within the domain.
The properties can vary continuously, as in cloaking applications or biological materials, or discontinuously, as in multi-material problems.
Then, in the parts of the domain where it can be an issue (e.g. a nearly incompressible material in linear elasticity), one can choose a variational formulation that is robustly well-posed with respect to the material properties.
In the remaining regions, where such robustness is not fundamental, one can choose a more computationally efficient formulation. 
Another example occurs when a near singularity is expected in a particular area, so that one would hope to use a variational formulation (with possibly an associated adaptivity scheme) which is desirable in that subdomain, but not necessarily in the entire physical domain \cite{DomainDecomMethods}.

The main issue with such an implementation arises at the interfaces between the two subdomains having distinct variational formulations.
At this interface, information must pass between the two subdomains to enable communication.
This imposes a coupling with both theoretical and practical compatibility issues which can be difficult to resolve and analyze. 
Moreover, the coupling must be constructed properly so that the entire problem is well-posed.
This is not immediate, even if each of the interacting variational formulations is well-posed when considered independently across the whole domain.

At the theoretical and infinite-dimensional level, an attractive possibility that naturally unburdens the compatibility and well-posedness requirements is the use of \textit{broken} variational formulations.
These mesh-dependent formulations are extensions of the usual variational formulations to the case involving broken (or discontinuous) test spaces.
They first arose in the analysis of the DPG methodology, which is a minimum residual method with broken test spaces \cite{demkowicz2010class,demkowicz2011class,DPGOverview}.
As will be seen, a family of distinct broken variational formulations can originate from the same system of equations through variational testing and by integrating by parts in different ways.
As expected, the formulations in the family will be closely related to each other.
In fact, for many systems of equations, the collection of formulations in each associated family can be shown to be mutually well-posed \cite{BrokenForms15,Keith2016,DemkowiczClosedRange}.
More importantly, the broken formulations in the family will be observed to inescapably possess interface variable unknowns which are a desirable means of communicating the necessary solution variable information across subdomains.
This is what allows introducing a proper definition of \textit{coupled} variational formulations, which will later be proved to be globally well-posed.

To actually compute approximate solutions to such a coupled system, one needs a method having discrete trial and test spaces that together retain well-posedness (i.e. numerical stability) \cite{babuska1971error,brezzi1974existence}.
This is easily achieved by use of the practical DPG methodology, the very method which motivated the systematic study of broken variational formulations.
Indeed, given a discrete trial space, the DPG methodology is especially crafted to approximate an optimal test space which reproduces the stability of the underlying infinite-dimensional broken variational formulation \cite{DPGOverview}.
Due to this unique design, it can be used with often discarded variational formulations which have different trial and test spaces, such as \textit{ultraweak} formulations \cite{DPGOverview,roberts2015discontinuous,bui2013unified}.
Apart from stability, the methodology carries other significant advantages including a well-behaved a posteriori error estimator for use in adaptive methods and a parallelizable assembly structure allowing local computation of optimal test functions from a standard (yet enriched) discretization of the underlying functional spaces.
However, the method is sometimes computationally intensive, because, when compared to standard methods, it typically comes at the cost of adding degrees of freedom along with some extra local computations \cite{roberts2014camellia}.

The purpose of this work is to demonstrate the use of the DPG methodology in solving the equations of linear elasticity via coupled variational formulations.
The family of variational formulations we study was introduced in \cite{Keith2016}, where all formulations were shown to be simultaneously well-posed.
Here, broken formulations will be shown to naturally dispose of many complications arising with the compatibility and well-posedness of coupled variational formulations.
Moreover, the use of the DPG methodology will corroborate expected theoretical convergence results.
Examples showing the viability of the approach at a practical level will be illustrated, including a case where the demanding scenario of a fully incompressible material is considered.
{This last case has physical applications in modeling steel braided rubber hoses and even stents.}

{
For the treatment of the DPG methodology as it applies to the equations of linear elasticity, it is worth highlighting \cite{Bramwell12,gopalakrishnan2014analysis,carstensen2014posteriori,Keith2016}.
}
{
Regarding the coupling of formulations, similarity exists between the approach in this work and that taken in \cite{HeuerKarkulikTransmission} (used for elliptic transmission problems).
There, a variational formulation similar to those considered here is coupled with a variational formulation composed of \textit{boundary integral} operators.
Afterward, the coupled formulation is discretized with the DPG methodology throughout the entire computational domain.
A remark is also warranted for the contributions in \cite{HeuerFuhrerCoupling,fuhrer2017coupling} where the ideas in \cite{HeuerKarkulikTransmission} are extended to couple the DPG methodology with more standard boundary element methods (BEMs), so that different discretization methods are considered across the domain.
}

This article is organized as follows.
In Section~\ref{sec:variational} a family of variational formulations equivalent to the equations of linear elasticity are introduced, followed by the associated family of broken variational formulations.
In Section~\ref{sec:coupled} coupled variational formulations are described.
The distinct broken formulations are shown to be compatible across common interfaces and the coupled formulations are proved to be well-posed.
In Section~\ref{sec:DPG} the DPG methodology used to solve the coupled formulations is outlined and novel linear algebra improvements are described.
Then, in Section~\ref{sec:results} two examples are exhibited, numerically solved, and discussed in detail.
The last example is a physically-relevant sheathed hose, for which a benchmark exact solution is derived.

\section{Variational formulations in linear elasticity}
\label{sec:variational}

\subsection{Linear elasticity equations}
\label{sec:linearelasticity}

In this work, the classical equations of static linear elasticity will be solved \cite{Ciarlet93}.
These are simply the linearization in the reference configuration about a stress-free state of the general constitutive model for solids and the conservation of momentum in the static case.
Indeed, let $\Omega\subseteq\R^3$ be a simply-connected bounded Lipschitz domain, whose boundary, $\bdry\Omega$, with normal, $\hat{\nml}$, can be partitioned into relatively open subsets, $\Gamma_u$ and $\Gamma_\sigma$, satisfying $\overline{\Gamma_u\cup\Gamma_\sigma}=\bdry\Omega$ and $\Gamma_u\cap\Gamma_\sigma=\varnothing$.
Then, the equations of linear elasticity can be written as the following first order system,
\begin{equation}
	\label{eq:LinElastSystem}
	\left\{
  \begin{aligned}
  	\sigma - \sfC\!:\!\varepsilon(u) &= 0\quad &&\text{in }\, \Omega\,,\\
   	-\div\sigma &= f\quad &&\text{in }\, \Omega\,,\\
   	u & = u^{\Gamma_u}\quad &&\text{on }\, \Gamma_u\,,\\
   	\sigma\!\cdot\!\hat{\nml} & = \sigma^{\Gamma_\sigma}_\nml\quad &&\text{on }\, \Gamma_\sigma\,,
  \end{aligned}
 	\right.
\end{equation}
where $u$ is the displacement, $\varepsilon(u)=\frac{1}{2}(\nabla u+\nabla u^\T)$ is its associated strain and $\sigma=\sigma^\T$ is the stress (which must be symmetric in order to satisfy conservation of angular momentum).
Meanwhile, $f$ is the known body force and the boundary conditions are given by the known displacement $u^{\Gamma_u}$ and traction $\sigma^{\Gamma_\sigma}_\nml$.
Lastly, $\sfC:\Sym\to\Sym$ is the stiffness tensor, where $\Sym$ is the space of symmetric $3\times3$ matrices.
For isotropic materials, it is $\sfC_{ijkl}=\lambda \delta_{ij}\delta_{kl}+\mu(\delta_{ik}\delta_{jl}+\delta_{il}\delta_{jk})$, where $\lambda$ and $\mu$ are the Lam\'e parameters.
All variables are assumed to be appropriately nondimensionalized.

The constitutive equation can be rewritten as 
\begin{equation}
	\sfS\!:\!\sigma-\varepsilon(u)=0\,,
	\label{eq:complianceconstitutive}
\end{equation}
where $\sfC^{-1}=\sfS:\Sym\to\Sym$ is the compliance tensor.
It is $\sfS_{ijkl}=\frac{1}{4\mu}(\delta_{ik}\delta_{jl}+\delta_{il}\delta_{jk})-\frac{\lambda}{2\mu(3\lambda+2\mu)}\delta_{ij}\delta_{kl}$ for isotropic materials.
This form is preferred when dealing with nearly incompressible materials (as $\lambda\to\infty$) because the norm of $\sfS$ remains finite, while that of $\sfC$ diverges.
This is the underlying reason why using this form in a variational setting prevents volumetric locking phenomena.

If instead of rewriting the constitutive equation one substitutes the stress $\sigma=\sfC\!:\!\varepsilon(u)$ into the equation for conservation of momentum, the resulting equation becomes $-\div(\sfC\!:\!\varepsilon(u))=f$.
This single second order equation with the corresponding boundary conditions is the starting point for the more traditional variational formulations.
However, as it will be seen, using the first order system gives more versatility to construct variational formulations.

\subsection{Hilbert spaces}
\label{sec:energyspaces}

Before proposing any variational formulations, it is first necessary to define the spaces of functions where the unknown variables will be sought.
First, define the $L^2$ inner product in a domain $K\subseteq\Omega$ as
\begin{equation}
	(u,v)_K=\int_K\tr(u^\T v)\dd K\,,
\end{equation}
where $\tr$ is the usual algebraic trace of a matrix, so that depending on whether $u$ and $v$ take scalar, vector or matrix values, $\tr(u^\T v)$ will be $uv$, $u\cdot v$ or $u:v$, respectively.
The underlying Hilbert spaces are
\begin{equation}
	\begin{aligned}
		L^2(K)&=\{u:K\to\R^3\mid \|u\|_{L^2(K)}^2=(u,u)_K<\infty\}\,,\\
		L^2(K;\mathbb{U})&=\{\omega:K\to\mathbb{U}\mid \|\omega\|_{L^2(K;\mathbb{U})}^2=(\omega,\omega)_K<\infty\}\,,\\
		H^1(K)&=\{u:K\to\R^3\mid\|u\|_{H^1(K)}^2=\|u\|_{L^2(K)}^2+\|\nabla u\|_{L^2(K;\Mat)}^2<\infty\}\,,\\
		H(\div,K)&=\{\sigma:K\to\Mat\mid\|\sigma\|_{H(\div,K)}^2=\|\sigma\|_{L^2(K;\Mat)}^2+\|\div\sigma\|_{L^2(K)}^2<\infty\}\,,\\
	\end{aligned}
	\label{eq:vectormatrixenergyspaces}
\end{equation}
where $\mathbb{U}$ is a subspace of $\Mat$, the space of $3\times3$ matrices, and where $\nabla u$ and $\div\sigma$ refer to the row-wise distributional gradient and divergence respectively.
More specifically, $\mathbb{U}$ can be the symmetric matrices, $\Sym$, the antisymmetric (or skew-symmetric) matrices, $\Skw$, or $\Mat$ itself.

Next, assume $K\subseteq\Omega$ is a Lipschitz domain and define the well-known surjective and continuous boundary trace operators,
\begin{equation}
\label{eq:ElementTrace}
	\begin{alignedat}{5}
		\tr_{\grad}^K&:H^1(K)\to H^{\onehalf}(\bdry K)\,,&&\qquad&\tr_{\grad}^K u&&&=u|_{\bdry K}\,,\\
		\tr_{\div}^K&:H(\div,K)\to H^{-\onehalf}(\bdry K)\,,&&\qquad&\tr_{\div}^K \sigma&&&=\sigma\!\cdot\!\hat{\nml}|_{\bdry K}\,,
	\end{alignedat}
\end{equation}
where $\hat{\nml}$ is the outward normal along $\bdry K$ and $\sigma\!\cdot\!\hat{\nml}$ represents a row-wise contraction.
Continuity of the operators is valid when $H^{\onehalf}(\bdry K)$ and $H^{-\onehalf}(\bdry K)$ are given minimum energy extension norms, in which case the two spaces are dual to each other.
Indeed, the following equalities were proved in \cite{BrokenForms15},
\begin{equation}
	\begin{aligned}
		\|\hat{u}\|_{H^{\onehalf}(\bdry K)}
			&=\inf_{u\in(\tr_{\grad}^K)^{-1}\{\hat{u}\}}\|u\|_{H^1(K)}
				=\sup_{\sigma\in H(\div,K)\setminus\{0\}}\widefrac[-12pt]{|\langle\hat{u},\tr_{\div}^K\sigma\rangle_{\bdry K}|}
					{\qquad\quad\|\sigma\|_{H(\div,K)}}\quad\,,\\
		\|\hat{\sigma}_\nml\|_{H^{-\onehalf}(\bdry K)}
			&=\inf_{\sigma\in(\tr_{\div}^K)^{-1}\{\hat{\sigma}_\nml\}}\|\sigma\|_{H(\div,K)}
				=\sup_{u\in H^1(K)\setminus\{0\}}\widefrac[0pt]{|\langle\hat{\sigma}_\nml,\tr_{\grad}^Ku\rangle_{\bdry K}|}
					{\qquad\|u\|_{H^1(K)}}\,,
	\end{aligned}
	\label{eq:traceidentitiesK}
\end{equation}
where $\langle\cdot,\cdot\rangle_{\bdry K}$ is the duality pairing $\langle\cdot,\cdot\rangle_{H^{\onehalf}(\bdry K)\times H^{-\onehalf}(\bdry K)}$ or $\langle\cdot,\cdot\rangle_{H^{-\onehalf}(\bdry K)\times H^{\onehalf}(\bdry K)}$ depending upon the context.
The duality pairings are well-defined by $\langle\hat{u},\hat{\sigma}_\nml\rangle_{H^{\onehalf}(\bdry K)\times H^{-\onehalf}(\bdry K)}=(u,\div\sigma)_K+(\nabla u,\sigma)_K$ for any $u\in(\tr_{\grad}^K)^{-1}\{\hat{u}\}$ and $\sigma\in(\tr_{\div}^K)^{-1}\{\hat{\sigma}_\nml\}$.
When $K=\Omega$, recall $\bdry\Omega$ is partitioned into $\Gamma_u$ and $\Gamma_\sigma$, and it may be useful to define spaces that vanish in these parts of the boundary.
Hence, define
\begin{equation}
	\begin{aligned}
		H_{\Gamma_u}^1(\Omega)&=\{u\in H^1(\Omega)\mid \tr_{\grad}^\Omega u|_{\Gamma_u}=0\}\,,\\
		H_{\Gamma_\sigma}(\div,\Omega)&=\{\sigma\in H(\div,\Omega)\mid\tr_{\div}^\Omega \sigma|_{\Gamma_\sigma}=0\}\,,
	\end{aligned}
\end{equation}
where the distributions $\tr_{\grad}^\Omega u|_{\Gamma_u}$ and $\tr_{\div}^\Omega \sigma|_{\Gamma_\sigma}$ are defined by restriction.

Usually, the spaces defined above (for $K=\Omega$) are all that is required, but, as it will be seen, \textit{broken} spaces will also be needed.
To define them, one must first note that these spaces are broken along a given mesh (i.e. an open partition), $\mesh$, of $\Omega$, containing elements (i.e. subdomains) $K\in\mesh$.
Hence, these spaces are mesh-dependent.
Indeed, they are merely those spaces in $L^2$ in $\Omega$ which are elementwise (locally defined by restriction) in the desired underlying space,
\begin{equation}
	\begin{aligned}
		L^2(\mesh)&=\{u\in L^2(\Omega)\mid\|u\|_{L^2(\mesh)}^2=\textstyle{\sum_{K\in\mesh}}\|u|_K\|_{L^2(K)}^2<\infty\}=L^2(\Omega)\,,\\
		L^2(\mesh;\mathbb{U})&=\{\omega\in L^2(\Omega;\mathbb{U})\mid\|\omega\|_{L^2(\mesh;\mathbb{U})}^2
			=\textstyle{\sum_{K\in\mesh}}\|\omega|_K\|_{L^2(K;\mathbb{U})}^2<\infty\}=L^2(\Omega;\mathbb{U})\,,\\
		H^1(\mesh)&=\{u\in L^2(\Omega)\mid\|u\|_{H^1(\mesh)}^2=\textstyle{\sum_{K\in\mesh}}\|u|_K\|_{H^1(K)}^2<\infty\}\,,\\
		H(\div,\mesh)&=\{\sigma\in L^2(\Omega;\Mat)\mid
			\|\sigma\|_{H(\div,\mesh)}^2=\textstyle{\sum_{K\in\mesh}}\|\sigma|_K\|_{H(\div,K)}^2<\infty\}\,.
	\end{aligned}
	\label{eq:BrokenSpacesAndNorms}
\end{equation}
Notice $H^1(\Omega)\subseteq H^1(\mesh)$ and $H(\div,\Omega)\subseteq H(\div,\mesh)$ because in general $H^1(\mesh)$ and $H(\div,\mesh)$ may have elements whose gradient and divergence over $\Omega$ are singular distributions (due to the presence of singularities at the the boundaries of the elements $K\in\mesh$), yet the restriction of the gradient and divergence to every $K\in\mesh$ is a regular distribution in $L^2(K;\Mat)$ and $L^2(K)$ respectively.
The $L^2$ mesh inner product associated to $\mesh$ is,
\begin{equation}
	(u,v)_\mesh=\sum_{K\in\mesh}(u|_K,v|_K)_K\,.
\end{equation}
Note that $(\cdot,\cdot)_\mesh$ is applicable even to singular distributions whose restriction to every $K\in\mesh$ is a regular distribution.
Thus, writing $(u,u)_\Omega=(u,u)_\mesh$ is only valid when $u\in L^2(\Omega)$ (or $L^2(\Omega;\mathbb{U})$).
Similarly, $\|u\|_{H^1(\mesh)}^2=(u,u)_\mesh+(\nabla u,\nabla u)_\mesh=(u,u)_\Omega+(\nabla u,\nabla u)_\Omega=\|u\|_{H^1(\Omega)}^2$ is valid when $u\in H^1(\Omega)$, but not when $u\in H^1(\mesh)$ because $\nabla u$ may not lie in $L^2(\Omega;\Mat)$.

Functions in broken spaces essentially have information isolated within each element of the mesh since there is technically no notion of a shared trace along adjacent elements.
Hence, broken spaces are usually accompanied by interface spaces that are intended to somehow share information along adjacent elements of the mesh.
Thus, interface spaces are also mesh-dependent.
To define them, first consider the mesh boundary trace operators
\begin{equation}
	\begin{alignedat}{5}
		\tr_{\grad}&:H^1(\mesh)\to\prod_{K\in\mesh}H^{\onehalf}(\bdry K)=H_{\Pi}^{\onehalf}(\bdry\mesh)\,,&&\qquad
			&\tr_{\grad} u&&&=\prod_{K\in\mesh}\tr_{\grad}^K u|_K\,,\\
		\tr_{\div}&:H(\div,\mesh)\to\prod_{K\in\mesh}H^{-\onehalf}(\bdry K)=H_{\Pi}^{-\onehalf}(\bdry\mesh)\,,&&\qquad
			&\tr_{\div}\sigma&&&=\prod_{K\in\mesh}\tr_{\div}^K \sigma|_K\,,
	\end{alignedat}
\end{equation}
where $H_{\Pi}^{\onehalf}(\bdry\mesh)$ and $H_{\Pi}^{-\onehalf}(\bdry\mesh)$ are endowed with the standard Hilbert norm for product spaces and are dual to each other,
\begin{equation}
	\begin{gathered}
		\|\hat{u}\|_{H_{\Pi}^{\onehalf}(\bdry\mesh)}^2=\sum_{K\in\mesh}\|\hat{u}_K\|_{ H^{\onehalf}(\bdry K)}^2\,,\qquad\quad
			\|\hat{\sigma}_\nml\|_{H_{\Pi}^{-\onehalf}(\bdry \mesh)}^2
				=\sum_{K\in\mesh}\|\hat{\sigma}_{\nml,K}\|_{ H^{-\onehalf}(\bdry K)}^2\,,\\
		\langle\cdot,\cdot\rangle_{\bdry\mesh}=\sum_{K\in\mesh}\langle\cdot,\cdot\rangle_{\bdry K}\,.
	\end{gathered}
	\label{eq:BrokenTraceNorms}
\end{equation}
The following equalities hold for all $\hat{u}\in H_{\Pi}^{\onehalf}(\bdry\mesh)$ and $\hat{\sigma}_\nml\in H_{\Pi}^{-\onehalf}(\bdry\mesh)$ (see \eqref{eq:traceidentitiesK} and \cite{BrokenForms15}),
\begin{equation}
	\begin{aligned}
		\|\hat{u}\|_{H_{\Pi}^{\onehalf}(\bdry\mesh)}
				&=\inf_{u\in\tr_{\grad}^{-1}\{\hat{u}\}}\|u\|_{H^1(\mesh)}
					=\sup_{\sigma\in H(\div,\mesh)\setminus\{0\}}\widefrac[-12pt]{|\langle\hat{u},\tr_{\div}\sigma\rangle_{\bdry\mesh}|}
						{\qquad\quad\|\sigma\|_{H(\div,\mesh)}}\quad\,,\\
		\|\hat{\sigma}_\nml\|_{H_{\Pi}^{-\onehalf}(\bdry\mesh)}
				&=\inf_{\sigma\in\tr_{\div}^{-1}\{\hat{\sigma}_\nml\}}\|\sigma\|_{H(\div,\mesh)}
					=\sup_{u\in H^1(\mesh)\setminus\{0\}}\widefrac[0pt]{|\langle\hat{\sigma}_\nml,\tr_{\grad}u\rangle_{\bdry\mesh}|}
						{\qquad\|u\|_{H^1(\mesh)}}\,.
	\end{aligned}
	\label{eq:traceidentitiesmesh}
\end{equation}
Now, the actual interface spaces that are of interest are only closed subspaces of $H_{\Pi}^{\onehalf}(\bdry\mesh)$ and $H_{\Pi}^{-\onehalf}(\bdry\mesh)$,
\begin{equation}
	H_{\Gamma_u}^{\onehalf}(\bdry\mesh)=\tr_{\grad}(H_{\Gamma_u}^1(\Omega))\,,\qquad\qquad
		H_{\Gamma_\sigma}^{-\onehalf}(\bdry\mesh)=\tr_{\div}(H_{\Gamma_\sigma}(\div,\Omega))\,,
\end{equation}
and they inherit the corresponding norms from $H_{\Pi}^{\onehalf}(\bdry\mesh)$ and $H_{\Pi}^{-\onehalf}(\bdry\mesh)$, respectively.

\subsection{A family of variational formulations}

A linear variational formulation is a problem of the form, 
\begin{equation}
	\label{eq:bilinearFormEQ}
 	\left\{
  	\begin{aligned}
   		&\text{Find } \fku\in U\,,\\
   		&b(\fku,\fkv) = \ell(\fkv)\,,\quad \text{for all }\, \fkv\in V\,,
  	\end{aligned}
 	\right.
\end{equation}
where $U$ and $V$ are trial and test Hilbert spaces over a fixed field $\mathbb{F}\in\{\R,\mathbb{C}\}$, $b:U\times V \to \mathbb{F}$ is a continuous bilinear form if $\mathbb{F}=\R$ or sesquilinear form if $\mathbb{F}=\C$, and $\ell\in V'$ is a continuous linear form if $\mathbb{F}=\R$ or antilinear form if $\mathbb{F}=\C$.

The most common way of constructing variational formulations which embody an underlying equation is by formally multiplying the equation by a test function and integrating.
One can then choose whether or not to formally integrate by parts; a process which, once the trial and test spaces are chosen appropriately, technically leads to different variational formulations of the same original equation. 
Applying the simplest form of this process to the first order system of linear elasticity in \eqref{eq:LinElastSystem} leads to four different variational formulations, which naturally involve the space $H(\div,K;\Sym)=\{\sigma:K\to\Sym\mid\sigma\in H(\div,K)\}$ \cite{Keith2016}.
This space has a strongly enforced symmetry of the second order tensors in the definition of the space itself.
However, $H(\div,K;\Sym)$ is notoriously difficult to discretize as a high order space with mathematically desirable properties \cite{elas3dfamily,mixedelas3d,JunHuStrongSymmetry}.
Thus, one common approach is to use $H(\div,K)$ instead and impose the necessary symmetries weakly through auxiliary variables, such as in \cite{Keith2016}, where the resulting bilinear forms of the four variational formulations are,
\begin{align}
	&\begin{aligned}
		&U^\scS=H_{\Gamma_\sigma}(\div,\Omega)\times H^1_{\Gamma_u}(\Omega)\,,\qquad\qquad
			V^\scS=L^2(\Omega;\Sym)\times L^2(\Omega)\times L^2(\Omega;\Skw)\,,\\
		&b^\scS((\sigma,u),(\tau,v,w))=(\sigma,\tau)_\Omega-(\sfC:\nabla u,\tau)_\Omega-(\div\sigma,v)_\Omega+(\sigma,w)_\Omega\,,
		\label{eq:VarFormStrong}
	\end{aligned}\displaybreak[2]\\[2mm]
	&\begin{aligned}
		&U^\scU=L^2(\Omega;\Sym)\times L^2(\Omega)\times L^2(\Omega;\Skw)\,,\qquad\qquad
			V^\scU=H_{\Gamma_\sigma}(\div,\Omega)\times H^1_{\Gamma_u}(\Omega)\,,\\
		&b^\scU((\sigma,u,\omega),(\tau,v))=(\sfS:\sigma,\tau)_\Omega+(\omega,\tau)_\Omega+(u,\div\tau)_\Omega+(\sigma,\nabla v)_\Omega\,,
		\label{eq:VarFormUltraweak}
	\end{aligned}\displaybreak[2]\\[2mm]
	&\begin{aligned}
		&U^\scD=L^2(\Omega;\Sym)\times H^1_{\Gamma_u}(\Omega)\,,\qquad\qquad
			V^\scD=L^2(\Omega;\Sym)\times H^1_{\Gamma_u}(\Omega)\,,\\
		&b^\scD((\sigma,u),(\tau,v))=(\sigma,\tau)_\Omega-(\sfC:\nabla u,\tau)_\Omega+(\sigma,\nabla v)_\Omega\,,
		\label{eq:VarFormDualmixed}
	\end{aligned}\displaybreak[2]\\[2mm]
	&\begin{aligned}
		&U^\scM=H_{\Gamma_\sigma}(\div,\Omega)\times L^2(\Omega)\times L^2(\Omega;\Skw)\,,\qquad\qquad
			V^\scM=H_{\Gamma_\sigma}(\div,\Omega)\times L^2(\Omega)\times L^2(\Omega;\Skw)\,,\\
		&b^\scM((\sigma,u,\omega),(\tau,v,w))=(\sfS:\sigma,\tau)_\Omega+(\omega,\tau)_\Omega+(u,\div\tau)_\Omega-(\div\sigma,v)_\Omega
			+(\sigma,w)_\Omega\,.
		\label{eq:VarFormMixed}
	\end{aligned}
	\intertext{Here, $\scS$ stands for strong, $\scU$ stands for ultraweak, $\scD$ stands for dual-mixed and $\scM$ stands for mixed.
Moreover, taking the second order form of the equation as a starting point (instead of the first order system), one can also obtain the more well-known primal variational formulation, $\scP$,
	}
	&\begin{aligned}
		&U^\scP=H^1_{\Gamma_u}(\Omega)\,,\qquad\qquad
			V^\scP=H^1_{\Gamma_u}(\Omega)\,,\\
		&b^\scP(u,v)=(\sfC:\nabla u,\nabla v)_\Omega\,.
		\label{eq:VarFormPrimal}
	\end{aligned}	
\end{align}
With homogeneous boundary conditions, $u^{\Gamma_u}=0$ and $\sigma^{\Gamma_\sigma}_\nml=0$, the linear forms, $\ell^{\scF}$, always take the form $\ell^{\scF}(\fkv)=(f,v)_\Omega$, where $v$ is a component of $\fkv\in V^{\scF}$ with $\scF$ being one of the formulations defined above.
With nonhomogeneous boundary conditions, $\ell^{\scF}$ will have terms involving extensions of the boundary conditions, $u^{\Gamma_u}$ and $\sigma^{\Gamma_\sigma}_\nml$, to $\bdry\Omega$ and $\Omega$.
For example, $\ell^\scU((\tau,v))=(f,v)_\Omega+\langle\check{u}^{\Gamma_u},\tr_{\div}^\Omega\tau\rangle_{\bdry\Omega}+\langle\check{\sigma}^{\Gamma_\sigma}_\nml,\tr_{\grad}^\Omega v\rangle_{\bdry\Omega}$, where $\check{u}^{\Gamma_u}$ and $\check{\sigma}^{\Gamma_\sigma}_\nml$ are some extension to $\bdry\Omega$ of $u^{\Gamma_u}$ and $\sigma^{\Gamma_\sigma}_\nml$ respectively.
As these expressions suggest, it is assumed that $f\in L^2(\Omega)$, $u^{\Gamma_u}\in\tr_{\grad}^\Omega(H^1(\Omega))|_{\Gamma_u}$ and $\sigma^{\Gamma_\sigma}_\nml\in\tr_{\div}^\Omega(H(\div,\Omega))|_{\Gamma_\sigma}$.
Moreover, whenever necessary, $\sfC$ and $\sfS$ are assumed to act on $\Mat$ (as opposed to merely $\Sym$) via the trivial extensions $\sfC|_\Skw=0$ and $\sfS|_\Skw=0$ (see \cite{Keith2016}).

In \cite{Keith2016} it was proved that, provided $\Gamma_u\neq\varnothing$, all the previously defined variational formulations (along with those making use of $H(\div,K;\Sym)$) are simultaneously well-posed in the sense of Hadamard.
That is, for the problem \eqref{eq:bilinearFormEQ} with the forms and spaces coming from one of \eqref{eq:VarFormStrong}--\eqref{eq:VarFormPrimal}, there exists a unique solution $\fku^\scF\in U^\scF$ satisfying the stability estimate $\|\fku^\scF\|_{U^\scF}\leq\frac{1}{\gamma^\scF}\|\ell\|_{(V^\scF)'}$ for some $\gamma^\scF>0$.
Since all variational formulations originate in the same equations, by testing with smooth functions it is made clear that the unique solutions agree among all formulations.

It should be noted that the list of variational formulations for the equations of linear elasticity proposed here is by no means exhaustive.
Indeed, alternative versions of \eqref{eq:VarFormStrong} and \eqref{eq:VarFormDualmixed} containing the compliance tensor (via use of \eqref{eq:complianceconstitutive}) are possible to construct, while in \cite{Oden76} energy functionals are used to propose up to fourteen different variational formulations for these equations.

\subsection{Broken variational formulations}
\label{sub:BrokenVariationFormulations}

For some numerical methods, mesh-dependent broken spaces can bring advantages.
In particular, consider the case where only the \textit{test} spaces are broken.
It is in this setting that broken variational formulations arise and, as it will be seen, this is fundamental in order to localize certain computations in the DPG methodology.

Consider a mesh, $\mesh$, of $\Omega$, containing elements $K\in\mesh$.
Instead of following the original approach of formally multiplying by a test function, integrating over $\Omega$, and integrating by parts if desired, the idea in this case is to integrate over each $K\in\mesh$ and then sum the contributions.
This differs from the former scenario in that the test functions can now be \textit{broken}, so that they may have trace discontinuities along the boundaries of adjacent elements in the mesh.
Thus, when integration by parts is performed, some mesh boundary terms seize to cancel and have to be explicitly considered.
Apart from these terms, the resulting formulations are the same as before, where \textit{unbroken} test functions were being used.
However, if they are to retain as much mathematical structure from the original \textit{unbroken} variational formulations, one finds that the new mesh boundary terms must have a life of their own and become additional independent variables.
That is, the price of using broken test functions is that one sometimes needs to define new mesh-dependent \textit{interface} variables along the boundary of the mesh (see \cite{DPGOverview,BrokenForms15,Keith2016} for more details).

\textit{Broken variational formulations} are precisely those formulations with broken test spaces constructed as described above.
They are clearly related to the original unbroken variational formulations, which do not require the test spaces to be broken.
In fact, the bilinear forms of broken variational formulations can be decoupled into two bilinear forms as,
\begin{equation}
	b(\fku,\fkv)= b_0(\fku_0,\fkv)+\hat{b}(\hat{\fku},\fkv)\,,
	\label{eq:BrokBilinDecouple}
\end{equation}
where $\fku=(\fku_0,\hat{\fku})\in U=U_0\times\hat{U}$ and $\fkv\in V$, with $U_0$ being the space associated to the original unbroken formulation, $\hat{U}$ being a space of interface variables, and $V$ being the broken test space directly associated to the test space $V_0\subseteq V$ coming from the original unbroken formulation.
When the test space is restricted from $V$ to $V_0$ the variational formulation collapses to the original unbroken formulation.
More precisely, $b_0|_{U_0\times V_0}$ is the bilinear form from the unbroken formulation and $\hat{b}|_{\hat{U}\times V_0}=0$, while $\ell|_{V_0}$ is the linear form from the unbroken formulation.
In this sense, a broken variational formulation can be interpreted as an extension to broken test spaces of an unbroken variational formulation.
It can be shown that the well-posedness of broken variational formulations depends on the well-posedness of the original unbroken variational formulation and that of $\hat{b}$ (see \cite{BrokenForms15}).
Moreover, the unique solution $\fku_0\in U_0$ to the unbroken formulation is the $U_0$ component of the unique solution $(\fku_0,\hat{\fku})\in U_0\times\hat{U}$ to the broken variational formulation.

The broken variational formulations associated to \eqref{eq:VarFormStrong}--\eqref{eq:VarFormPrimal} are
\begin{align}
	&\begin{aligned}
		&U_0^{\scS_\mesh}=U^\scS\,,\quad \hat{U}^{\scS_\mesh}=\varnothing\,,\qquad
			V^{\scS_\mesh}=L^2(\mesh;\Sym)\times L^2(\mesh)\times L^2(\mesh;\Sym)\,,\\
		&b_0^{\scS_\mesh}((\sigma,u),(\tau,v,w))=(\sigma,\tau)_\mesh-(\sfC:\nabla u,\tau)_\mesh
			-(\div\sigma,v)_\mesh+(\sigma,w)_\mesh\,,
		\label{eq:BrokVarFormStrong}
	\end{aligned}\displaybreak[2]\\[2mm]
	&\begin{aligned}
		&U_0^{\scU_\mesh}=U^\scU\,,\quad 
			\hat{U}^{\scU_\mesh}=H_{\Gamma_u}^{\onehalf}(\bdry\mesh) \times H_{\Gamma_\sigma}^{-\onehalf}(\bdry\mesh)\,,\qquad
				V^{\scU_\mesh}=H(\div,\mesh)\times H^1(\mesh)\,,\\
		&b_0^{\scU_\mesh}((\sigma,u,\omega),(\tau,v))
			=(\sfS:\sigma,\tau)_\mesh+(\omega,\tau)_\mesh+(u,\div\tau)_\mesh+(\sigma,\nabla v)_\mesh\,,\\
		&\hat{b}^{\scU_\mesh}((\hat{u},\hat{\sigma}_\nml),(\tau,v))=
			-\langle\hat{u},\tr_{\div}\tau\rangle_{\bdry\mesh}-\langle\hat{\sigma}_\nml,\tr_{\grad}v\rangle_{\bdry\mesh}\,,
		\label{eq:BrokVarFormUltraweak}
	\end{aligned}\displaybreak[2]\\[2mm]
	&\begin{aligned}
		&U_0^{\scD_\mesh}=U^\scD\,,\quad 
			\hat{U}^{\scD_\mesh}=H_{\Gamma_\sigma}^{-\onehalf}(\bdry\mesh)\,,\qquad
				V^{\scD_\mesh}=L^2(\mesh;\Sym)\times H^1(\mesh)\,,\\
		&b_0^{\scD_\mesh}((\sigma,u),(\tau,v))=(\sigma,\tau)_\mesh-(\sfC:\nabla u,\tau)_\mesh+(\sigma,\nabla v)_\mesh\,,\\
		&\hat{b}^{\scD_\mesh}(\hat{\sigma}_\nml,(\tau,v))=-\langle\hat{\sigma}_\nml,\tr_{\grad}v\rangle_{\bdry\mesh}\,,
		\label{eq:BrokVarFormDualmixed}
	\end{aligned}\displaybreak[2]\\[2mm]
	&\begin{aligned}
		&U_0^{\scM_\mesh}=U^\scM\,,\quad 
			\hat{U}^{\scM_\mesh}=H_{\Gamma_u}^{\onehalf}(\bdry\mesh)\,,\qquad
				V^{\scM_\mesh}=H(\div,\mesh)\times L^2(\mesh)\times L^2(\mesh;\Skw)\,,\\
		&b_0^{\scM_\mesh}((\sigma,u,\omega),(\tau,v,w))=(\sfS:\sigma,\tau)_\mesh+(\omega,\tau)_\mesh+(u,\div\tau)_\mesh-(\div\sigma,v)_\mesh
			+(\sigma,w)_\mesh\,,\\
		&\hat{b}^{\scM_\mesh}(\hat{u},(\tau,v,w))=-\langle\hat{u},\tr_{\div}\tau\rangle_{\bdry\mesh}\,,
		\label{eq:BrokVarFormMixed}
	\end{aligned}\displaybreak[2]\\[2mm]
	&\begin{aligned}
		&U_0^{\scP_\mesh}=U^\scP\,,\quad
			\hat{U}^{\scP_\mesh}=H_{\Gamma_\sigma}^{-\onehalf}(\bdry\mesh)\,,\qquad
				V^{\scP_\mesh}=H^1(\mesh)\,,\\
		&b_0^{\scP_\mesh}(u,v)=(\sfC:\nabla u,\nabla v)_\mesh\,,\\
		&\hat{b}^{\scP_\mesh}(\hat{\sigma}_\nml,v)=-\langle\hat{\sigma}_\nml,\tr_{\grad}v\rangle_{\bdry\mesh}\,,
		\label{eq:BrokVarFormPrimal}
	\end{aligned}
\end{align}
where $U^{\scF_\mesh}=U_0^{\scF_\mesh}\times\hat{U}^{\scF_\mesh}$ with $\scF$ being a placeholder for one of the preceding formulations, and where $b^{\scF_\mesh}:U^{\scF_\mesh}\times V^{\scF_\mesh}\to\R$ is defined in terms of $b_0^{\scF_\mesh}$ and $\hat{b}^{\scF_\mesh}$ by \eqref{eq:BrokBilinDecouple}.
As before, the linear forms $\ell^{\scF_\mesh}$ always have the term $(f,v)_\mesh$ and additionally may include terms involving extensions of the boundary conditions $u^{\Gamma_u}$ and $\sigma^{\Gamma_\sigma}_\nml$, to $\Omega$ and the boundary of the mesh (by use of $\tr_{\grad}$ and $\tr_{\div}$ on an extension to $\Omega$).
For example, $\ell^{\scU_\mesh}((\tau,v))=(f,v)_\mesh+\langle\breve{u}^{\Gamma_u},\tr_{\div}\tau\rangle_{\bdry\mesh}+\langle\breve{\sigma}^{\Gamma_\sigma}_\nml,\tr_{\grad} v\rangle_{\bdry\mesh}$, where $\breve{u}^{\Gamma_u}$ and $\breve{\sigma}^{\Gamma_\sigma}_\nml$ are some extension to $\tr_{\grad}(H^1(\Omega))$ and $\tr_{\div}(H(\div,\Omega))$ of $u^{\Gamma_u}$ and $\sigma^{\Gamma_\sigma}_\nml$ respectively.
As expected, $b^{\scF_\mesh}$ and $\ell^{\scF_\mesh}$ can be viewed as extensions to the original forms $b^{\scF}$ and $\ell^{\scF}$, because they collapse to the latter when testing against unbroken test functions in $V^{\scF}\subseteq V^{\scF_\mesh}$.
That is, $b_0^{\scF_\mesh}|_{U^{\scF}\times V^{\scF}}=b^{\scF}$, $\hat{b}^{\scF_\mesh}|_{\hat{U}^{\scF_\mesh}\times V^{\scF}}=0$ and $\ell^{\scF_\mesh}|_{V^{\scF}}=\ell^{\scF}$.

As long as $\Gamma_u\neq\varnothing$, the broken variational formulations in \eqref{eq:BrokVarFormStrong}--\eqref{eq:BrokVarFormPrimal} were shown to be mutually well-posed in \cite{Keith2016} by using fundamental results proved in \cite[Theorem~3.1]{BrokenForms15} along with the proof of mutual well-posedness of the original formulations in \eqref{eq:VarFormStrong}--\eqref{eq:VarFormPrimal} (proved in \cite{Keith2016}) and the identities in \eqref{eq:traceidentitiesmesh} (proved in \cite{BrokenForms15}).
The constant associated to the stability estimate from the well-posedness is independent from the choice of the mesh.

\section{Coupled variational formulations}
\label{sec:coupled}

As mentioned initially, there are multiple reasons that explain why it is desirable to solve the equations of linear elasticity with different variational formulations on distinct subdomains of the initial domain.
The challenge in attaining this goal is that one must find a way of communicating solution information across the shared boundaries of the subdomains.
For the purpose of illustration, simply consider a domain $\Omega$ partitioned into two disjoint subdomains, $\Omega^\scU$ and $\Omega^\scP$, with a common interface, $\Gamma_I$, such that $\overline{\Omega}{}^\scU\cup\overline{\Omega}{}^\scP=\overline{\Omega}$ and $\overline{\Omega}{}^\scU\cap\overline{\Omega}{}^\scP=\overline{\Gamma}_I$.
As suggested by the notation, suppose that the equations of linear elasticity are to be solved in $\Omega^\scU$ via the ultraweak variational formulation in \eqref{eq:VarFormUltraweak}, and in $\Omega^\scP$ via the primal variational formulation in \eqref{eq:VarFormPrimal}.
If a solution is to exist, then it should be compatible in some sense at the common interface $\Gamma_I$. 
This immediately poses a theoretical concern because the displacement variable in the ultraweak variational formulation lies in $L^2(\Omega)|_{\Omega^\scU}$ and so it does not even have a notion of trace at $\Gamma_I$.
Thus, it is not compatible with the primal displacement variable which lies in $H_{\Gamma_u}^1(\Omega)|_{\Omega^\scP}$.
A similar issue also arises with the test spaces, which are obviously different on each subdomain.
Even though the finite-dimensional trial and test subspaces of any naive discretization generally do have well-defined traces, these difficulties are reasonably expected to be inherited by the discretization, meaning any discrete convergence or stability analysis will probably be laborious, if at all possible.
Hence, the goal is to resolve the compatibility concerns at the infinite-dimensional level by developing a globally well-posed variational problem. 
Once this is done, there will be a clearer hope of producing stable and convergent discretizations.

The claim is that by using broken variational formulations, the theoretical compatibility issues are naturally dealt with.
To see this, suppose instead that the equations are to be solved in $\Omega^\scU$ with the ultraweak \textit{broken} variational formulation in \eqref{eq:BrokVarFormUltraweak} and in $\Omega^\scP$ with the primal \textit{broken} variational formulation in \eqref{eq:BrokVarFormPrimal}.
The mesh associated to the broken formulations, $\mesh$, is obviously assumed to be consistent with the subdomain partitioning, meaning that it is a refinement of the subdomain mesh, $\mesh_0=\{\Omega^\scU,\Omega^\scP\}$, and as such, there exist submeshes $\mesh^\scU$ and $\mesh^\scP$ of $\Omega^\scU$ and $\Omega^\scP$ respectively, such that $\mesh=\mesh^\scU\cup\mesh^\scP$.
In this scenario, the displacement variable in the ultraweak domain still lies in $L^2(\Omega)|_{\Omega^\scU}$, but the difference is that now there is an extra \textit{interface} displacement variable, $\hat{u}^\scU\in H_{\Gamma_u}^{\onehalf}(\bdry\mesh)|_{\mesh^\scU}$.
This variable is very convenient, as it is theoretically compatible at $\Gamma_I$ with the well-defined trace of the displacement variable of the primal variational formulation, $u^\scP\in H^1_{\Gamma_u}(\Omega)|_{\Omega^\scP}$.
Similarly, with regard to the stress, there exist two new interface traction variables, $\hat{\sigma}_\nml^\scU\in H_{\Gamma_\sigma}^{-\onehalf}(\bdry\mesh)|_{\mesh^\scU}$ and $\hat{\sigma}_\nml^\scP\in H_{\Gamma_\sigma}^{-\onehalf}(\bdry\mesh)|_{\mesh^\scP}$, which are naturally compatible at $\Gamma_I$.
Meanwhile, the use of broken test spaces relinquishes any compatibility requirements at the level of test spaces.
Notice the compatibility is not limited to the broken ultraweak and primal formulations.
Indeed, a close observation of the broken variational formulations in \eqref{eq:BrokVarFormStrong}--\eqref{eq:BrokVarFormPrimal} shows that there is always either an explicit interface variable or sufficient regularity to have well-defined traces of the displacement and stress.


The next task is to more rigorously define the actual \textit{coupled} variational formulations and analyze their well-posedness.
Continuing with the basic example, let $U^{\scP_\mesh}|_{\Omega^\scP}$ be the restriction of the trial space to $\Omega^\scP$ meaning that typical field variables in $U^{\scP_\mesh}_0=U^\scP$ have their domain restricted to $\Omega^\scP$, while the interface variables in $\hat{U}^{\scP_\mesh}$ are restricted to those components associated to elements in $\mesh^\scP$.
Therefore, the space is $U^{\scP_\mesh}|_{\Omega^\scP}=H_{\Gamma_u}^1(\Omega)|_{\Omega^\scP}\times H_{\Gamma_\sigma}^{-\onehalf}(\bdry\mesh)|_{\mesh^\scP}$, with the restricted component norms being ${\|\cdot\|_{H^1(\Omega^\scP)}}$ and ${\|\cdot\|_{H_{\Pi}^{-\onehalf}(\bdry\mesh^\scP)}}$ respectively.
The same applies to $U^{\scU_\mesh}|_{\Omega^\scU}$ and the broken test spaces $V^{\scU_\mesh}|_{\Omega^\scU}$ and $V^{\scP_\mesh}|_{\Omega^\scP}$.
Then, the trial and test spaces associated to the coupled formulations are
\begin{equation}
 	\begin{aligned}
 		U^\scC&=\Big\{\fku^{\scC}=(\fku^\scU,\fku^\scP)\,\Big|\,
 			\fku^\scU=(\sigma^\scU,u^\scU,\omega^\scU,\hat{u}^\scU,\hat{\sigma}_\nml^\scU)\in U^{\scU_\mesh}|_{\Omega^\scU},\,
 				\fku^\scP=(u^\scP,\hat{\sigma}_\nml^\scP)\in U^{\scP_\mesh}|_{\Omega^\scP},\\
 		&\qquad\qquad\qquad\qquad\qquad\qquad\qquad\qquad\qquad\qquad\qquad\quad
 			\hat{u}^\scU|_{\Gamma_I}=u^\scP|_{\Gamma_I},\,
 				\hat{\sigma}_\nml^\scU|_{\Gamma_I}=-\hat{\sigma}_\nml^\scP|_{\Gamma_I}\Big\}\,,\\
 		V^{\scC}&=V^{\scU_\mesh}|_{\Omega^\scU}\times V^{\scP_\mesh}|_{\Omega^\scP}\,.
 	\end{aligned}
 	\label{eq:CoupledTrialAndTestSpaces}
\end{equation}
Hence, the trial space is the subspace of $U^{\scU_\mesh}|_{\Omega^\scU}\times U^{\scP_\mesh}|_{\Omega^\scP}$ which satisfies transmission conditions for both displacement and stress at $\Gamma_I$ (see Remark \ref{rmk:TransmissionConditions} for more details).
On the other hand, the \textit{broken} test space is oblivious to any transmission conditions.
Lastly, the bilinear and linear forms of the coupled variational formulation are
\begin{equation}
 	\begin{aligned}
 		b^{\scC}(\fku^{\scC},\fkv^{\scC})&=b^{\scU_\mesh}|_{\Omega^\scU}(\fku^\scU,\fkv^\scU)+
			b^{\scP_\mesh}|_{\Omega^\scP}(\fku^\scP,\fkv^\scP)\,,\\
 		\ell^{\scC}(\fkv^{\scC})&=\ell^{\scU_\mesh}|_{\Omega^\scU}(\fkv^\scU)+\ell^{\scP_\mesh}|_{\Omega^\scP}(\fkv^\scP)\,,
 	\end{aligned}
 	\label{eq:CoupledBilinearAndLinearForms}
\end{equation}
where the restricted forms $b^{\scU_\mesh}|_{\Omega^\scU}$ and $b^{\scP_\mesh}|_{\Omega^\scP}$ are those formulations in \eqref{eq:BrokVarFormUltraweak} and \eqref{eq:BrokVarFormPrimal} but with the inner products and duality pairings only acting over those elements in $\mesh^\scU$ and $\mesh^\scP$ respectively.
The same applies to the linear forms $\ell^{\scU_\mesh}|_{\Omega^\scU}$ and $\ell^{\scP_\mesh}|_{\Omega^\scP}$.
Evidently, by carefully identifying the trial spaces to enforce the compatibility conditions at the interdomain boundaries, coupled variational formulations can be rigorously generalized to any finite partition of the domain into subdomains, wherein each subdomain is endowed with a broken variational formulation among those found in \eqref{eq:BrokVarFormStrong}--\eqref{eq:BrokVarFormPrimal}.

\begin{remark}
\label{rmk:TransmissionConditions}
There is an abuse of notation when specifying the transmission conditions that enforce the compatibility at the interdomain boundaries in \eqref{eq:CoupledTrialAndTestSpaces}.
More precisely, $\hat{u}^\scU|_{\Gamma_I}=u^\scP|_{\Gamma_I}$ and $\hat{\sigma}_\nml^\scU|_{\Gamma_I}=-\hat{\sigma}_\nml^\scP|_{\Gamma_I}$ denote that there exist global extensions $\tilde{u}\in H^1_{\Gamma_u}(\Omega)$ and $\tilde{\sigma}\in H_{\Gamma_\sigma}(\div,\Omega)$ such that $\tr_{\grad}\tilde{u}|_{\mesh^\scU}=\hat{u}^\scU$, $\tilde{u}|_{\Omega^\scP}=u^\scP$, $\tr_{\div}\tilde{\sigma}|_{\mesh^\scU}=\hat{\sigma}_\nml^\scU$ and $\tr_{\div}\tilde{\sigma}|_{\mesh^\scP}=\hat{\sigma}_\nml^\scP$.
In fact, these global extensions for the displacement and stress, $\tilde{u}$ and $\tilde{\sigma}$, are fundamental in the numerical implementation, where they are considered global variables in the context of a multi-physics domain, whereas the remaining variables only have local support in a particular subdomain.
Moreover, the concept of the extensions is also important for specifying the problem boundary conditions, $u^{\Gamma_u}$ and $\sigma^{\Gamma_\sigma}_\nml$.
Indeed, by definition there exist extensions, $\tilde{u}^{\Gamma_u}\in H^1(\Omega)$ and $\tilde{\sigma}^{\Gamma_\sigma}\in H(\div,\Omega)$, whose appropriate restrictions (e.g. $\tr_{\grad}\tilde{u}^{\Gamma_u}|_{\mesh^\scU}$ and $\tilde{u}^{\Gamma_u}|_{\Omega^\scP}$ for the displacement) play a role in the linear forms $\ell^{\scU_\mesh}|_{\Omega^\scU}$ and $\ell^{\scP_\mesh}|_{\Omega^\scP}$.
\end{remark}

It remains to show that the coupled variational formulations are well-posed.
The technique is similar in spirit to the one utilized in proving well-posedness of broken variational formulations as outlined by \cite[Theorem~3.1]{BrokenForms15}, where the first step is always to test with unbroken test functions to cancel the boundary terms.
The main idea here, however, is to collapse everything to the well-posed ultraweak formulation by testing with more regular test functions and integrating by parts when necessary.
This is interesting since the ultraweak formulation is effectively being used as a tool for a proof, due to its attractive property of having all the weight of the derivatives on the test functions.
Before presenting the proof, a useful and necessary lemma already established in \cite[Appendix~A]{Keith2016} is restated.
%
%

\begin{lemma}
\label{lem:CharacterizationOfTraces}
Let $\mesh$ be an open partition of a domain $\Omega$, and $\,\Gamma_u$ and $\,\Gamma_\sigma$ be relatively open subsets in $\bdry\Omega$ satisfying  $\overline{\Gamma_u\cup\Gamma_\sigma}=\bdry\Omega$ and $\Gamma_u\cap\Gamma_\sigma=\varnothing$.
\begin{enumerate}[font=\upshape,label={(\roman*)},ref={\thelemma(\roman*)}]
	\item Let $v\in H^1(\mesh)$. Then $v\in H^1_{\Gamma_u}(\Omega)$ if and only if $\langle\hat{\sigma}_\nml,\tr_{\grad}v\rangle_{\bdry\mesh}=0$ for all $\hat{\sigma}_\nml\in H^{-\onehalf}_{\Gamma_\sigma}(\bdry\mesh)$. \label{lem:H1Subspace}
	\item Let $\tau\in H(\div,\mesh)$. Then $\tau\in H_{\Gamma_\sigma}(\div,\Omega)$ if and only if $\langle\hat{u},\tr_{\div}\tau\rangle_{\bdry\mesh}=0$ for all $\hat{u}\in H^{\onehalf}_{\Gamma_u}(\bdry\mesh)$. \label{lem:HdivSubspace}
\end{enumerate}
\end{lemma}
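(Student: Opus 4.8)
The plan is to prove (i) in detail; (ii) then follows by an entirely symmetric argument, exchanging the roles of $\tr_{\grad}$ and $\tr_{\div}$, of $H^1$ and $H(\div)$, and of $\Gamma_u$ and $\Gamma_\sigma$, using the second of the duality identities in \eqref{eq:traceidentitiesmesh} in place of the first. So I focus on (i).

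For the forward implication, suppose $v\in H^1_{\Gamma_u}(\Omega)$. Then $v\in H^1(\Omega)$, so $\nabla v\in L^2(\Omega;\Mat)$ and the mesh-wise quantities coincide with the global ones: $(v,\div\sigma)_\mesh+(\nabla v,\sigma)_\mesh=(v,\div\sigma)_\Omega+(\nabla v,\sigma)_\Omega$ for any $\sigma\in H(\div,\Omega)$. Given $\hat{\sigma}_\nml\in H^{-\onehalf}_{\Gamma_\sigma}(\bdry\mesh)$, by the very definition $H_{\Gamma_\sigma}^{-\onehalf}(\bdry\mesh)=\tr_{\div}(H_{\Gamma_\sigma}(\div,\Omega))$ there exists $\sigma\in H_{\Gamma_\sigma}(\div,\Omega)$ with $\tr_{\div}\sigma=\hat{\sigma}_\nml$. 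Using the definition of the duality pairing on each element $K\in\mesh$ and summing over the mesh, $\langle\hat{\sigma}_\nml,\tr_{\grad}v\rangle_{\bdry\mesh}=\sum_{K\in\mesh}\langle\tr_{\div}^K\sigma|_K,\tr_{\grad}^K v|_K\rangle_{\bdry K}=(v,\div\sigma)_\Omega+(\nabla v,\sigma)_\Omega=\langle\tr_{\grad}^\Omega v,\tr_{\div}^\Omega\sigma\rangle_{\bdry\Omega}$, where the last equality is the single-domain pairing identity for $K=\Omega$. Now $\tr_{\grad}^\Omega v|_{\Gamma_u}=0$ because $v\in H^1_{\Gamma_u}(\Omega)$, and $\tr_{\div}^\Omega\sigma|_{\Gamma_\sigma}=0$ because $\sigma\in H_{\Gamma_\sigma}(\div,\Omega)$; since $\overline{\Gamma_u\cup\Gamma_\sigma}=\bdry\Omega$ with the two relatively open and disjoint, the pairing over $\bdry\Omega$ vanishes. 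Hence $\langle\hat{\sigma}_\nml,\tr_{\grad}v\rangle_{\bdry\mesh}=0$.

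For the converse, suppose $v\in H^1(\mesh)$ satisfies $\langle\hat{\sigma}_\nml,\tr_{\grad}v\rangle_{\bdry\mesh}=0$ for all $\hat{\sigma}_\nml\in H^{-\onehalf}_{\Gamma_\sigma}(\bdry\mesh)$. I would argue in two steps. First, taking test fields $\sigma\in H(\div,\Omega)$ compactly supported in $\Omega$ (so $\tr_{\div}\sigma\in H^{-\onehalf}_{\Gamma_\sigma}(\bdry\mesh)$, indeed with vanishing trace on all of $\bdry\Omega$), the relation $0=\langle\tr_{\div}\sigma,\tr_{\grad}v\rangle_{\bdry\mesh}=(v,\div\sigma)_\mesh+(\nabla v,\sigma)_\mesh$ shows that the piecewise gradient of $v$ has no distributional jump contributions across interior mesh faces, i.e. the global distributional gradient of $v$ equals the piecewise $L^2$ gradient; therefore $v\in H^1(\Omega)$. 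Second, now that $v\in H^1(\Omega)$, reversing the computation in the forward direction gives $\langle\tr_{\grad}^\Omega v,\tr_{\div}^\Omega\sigma\rangle_{\bdry\Omega}=\langle\tr_{\div}\sigma,\tr_{\grad}v\rangle_{\bdry\mesh}=0$ for every $\sigma\in H_{\Gamma_\sigma}(\div,\Omega)$, which says $\tr_{\grad}^\Omega v$ annihilates $\tr_{\div}^\Omega(H_{\Gamma_\sigma}(\div,\Omega))=H^{-\onehalf}_{\Gamma_\sigma}(\bdry\mesh)$. It remains to conclude $\tr_{\grad}^\Omega v|_{\Gamma_u}=0$, i.e. $v\in H^1_{\Gamma_u}(\Omega)$. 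This is where a density/duality argument on $\bdry\Omega$ is needed: one shows that a function in $H^{\onehalf}(\bdry\Omega)$ that is orthogonal (in the duality sense) to the space of normal traces vanishing on $\Gamma_\sigma$ must itself vanish on $\Gamma_u$ — equivalently, that normal traces of $H_{\Gamma_\sigma}(\div,\Omega)$ are rich enough to detect any nonzero boundary displacement on $\Gamma_u$.

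The main obstacle is precisely this last boundary-localization step: passing from "annihilates all admissible normal-trace functionals" to "vanishes on $\Gamma_u$." Everything else is bookkeeping with the pairing identities of Section~\ref{sec:energyspaces} and the fact that $H^1(\Omega)\subseteq H^1(\mesh)$. For the delicate step I would invoke the trace-space characterizations and the surjectivity of $\tr_{\div}^\Omega$ onto $H^{-\onehalf}(\bdry\Omega)$ together with the partition $\overline{\Gamma_u\cup\Gamma_\sigma}=\bdry\Omega$; since this lemma is quoted as already established in \cite[Appendix~A]{Keith2016}, I would at this point simply cite that reference for the full technical justification of the boundary-localization and not reprove it from scratch.
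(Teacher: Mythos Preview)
The paper does not actually prove this lemma; it is merely restated from \cite[Appendix~A]{Keith2016} and used as a tool in the proof of Theorem~\ref{thm:WellPosedness}. Consequently there is no in-text proof to compare against: the paper's ``proof'' is precisely the citation you yourself invoke at the end.

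Your sketch is sound and follows the standard route for such trace characterizations. The forward direction is correct as written. For the converse, your two-step plan---first recover $v\in H^1(\Omega)$ by testing with $\sigma$ supported away from $\bdry\Omega$, then localize the boundary trace to $\Gamma_u$ via duality---is exactly the right structure, and you correctly flag the second step as the nontrivial one. Since both you and the paper ultimately defer that boundary-localization argument to \cite{Keith2016}, your proposal is consistent with (and in fact more explicit than) what the paper provides.
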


\begin{theorem}
	\label{thm:WellPosedness}
Let $\Omega$ be a domain partitioned into a finite number of subdomains, wherein each subdomain is endowed with a broken variational formulation of linear elasticity among those found in \eqref{eq:BrokVarFormStrong}--\eqref{eq:BrokVarFormPrimal}.
Provided $\Gamma_u\neq\varnothing$, the resulting coupled variational formulation is well-posed.
\end{theorem}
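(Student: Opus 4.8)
The plan is to establish well-posedness via the Banach–Nečas–Babuška conditions (injectivity plus inf–sup plus surjectivity of the adjoint), following the philosophy of \cite[Theorem~3.1]{BrokenForms15}: reduce everything to a formulation already known to be well-posed. Rather than reduce to each individual unbroken formulation separately, the key conceptual move is to collapse the \emph{entire coupled problem} onto a single global ultraweak formulation. Concretely, given a coupled solution candidate $\fku^\scC$, I would first test with globally regular (unbroken) test functions in each subdomain — i.e. choose $\fkv^\scU$ in the unbroken subspace $V^{\scU}$ restricted appropriately and $\fkv^\scP\in V^{\scP}$ — so that by the decomposition \eqref{eq:BrokBilinDecouple} every interface term $\hat{b}^{\scF_\mesh}$ against $V^{\scF}$ vanishes. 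This kills the broken boundary pairings and leaves the unbroken bilinear forms on each subdomain. Using Lemma~\ref{lem:CharacterizationOfTraces}, the transmission conditions in \eqref{eq:CoupledTrialAndTestSpaces} then let me glue the field variables across $\Gamma_I$ into \emph{global} functions in $H^1_{\Gamma_u}(\Omega)$ and $H_{\Gamma_\sigma}(\div,\Omega)$: the interface equalities $\hat u^\scU|_{\Gamma_I}=u^\scP|_{\Gamma_I}$ and $\hat\sigma_\nml^\scU|_{\Gamma_I}=-\hat\sigma_\nml^\scP|_{\Gamma_I}$ are exactly the statement (via the remark's precise meaning) that there are global $H^1$ and $H(\div)$ extensions, hence the pieces assemble into admissible global trial functions for one of the unbroken formulations on all of $\Omega$.

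Next I would invoke the mutual well-posedness of the unbroken formulations \eqref{eq:VarFormStrong}--\eqref{eq:VarFormPrimal} proved in \cite{Keith2016} (valid since $\Gamma_u\neq\varnothing$). Because testing against the regular subspace reproduces the global unbroken formulation (with right-hand side $(f,\cdot)$ plus the extensions of the boundary data as in Remark~\ref{rmk:TransmissionConditions}), the field components $\fku_0$ of the coupled solution are uniquely determined and satisfy the corresponding a priori estimate. This handles \emph{injectivity}: if $\ell^\scC=0$, the collapsed problem forces all field variables to vanish globally, and then returning to the full broken formulation with general broken test functions, the surviving terms are precisely the duality pairings $\langle\hat{\fku},\tr(\cdot)\rangle_{\bdry\mesh}$, which vanish for all broken $\fkv$ only if the interface unknowns are zero, by the trace identities \eqref{eq:traceidentitiesmesh} (the supremum characterizations give $\|\hat u\|=\|\hat\sigma_\nml\|=0$). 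For the \emph{inf--sup} bound one combines the subdomain-wise inf--sup constants of the broken formulations \eqref{eq:BrokVarFormStrong}--\eqref{eq:BrokVarFormPrimal} (mesh-independent, from \cite{Keith2016,BrokenForms15}) with the observation that the coupled test space is a product, so one can test each subdomain independently; the only subtlety is that the coupled \emph{trial} space is a constrained subspace, which can only \emph{increase} the inf--sup constant since the supremum is over the same test space. Surjectivity of the adjoint (equivalently, density of the range, or the transposed inf--sup condition) follows symmetrically, again using that restricting to a closed subspace of trial functions subject to linear transmission constraints preserves the needed nondegeneracy because those constraints are compatible with the global unbroken structure.

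The main obstacle, and where care is genuinely required, is the gluing step: verifying that the subdomain pieces together with the interface variables really do assemble into \emph{global} elements of $H^1_{\Gamma_u}(\Omega)$ and $H_{\Gamma_\sigma}(\div,\Omega)$, and that this is an equivalence of norms (not merely a continuous inclusion) so that the estimates transfer with constants independent of the mesh refinement $\mesh$ beneath the subdomain partition $\mesh_0$. This is exactly the content that Lemma~\ref{lem:CharacterizationOfTraces} is designed to supply — characterizing when a piecewise-$H^1$ (resp. piecewise-$H(\div)$) function with matching interface data is actually globally conforming — but one must check it applies across an \emph{interdomain} interface $\Gamma_I$ that may itself be meshed, not just across a single element face, and that the various formulations paired across $\Gamma_I$ (e.g. a traction interface variable from one side against a displacement trace from the other) are dual in the right way so the transmission conditions are the correct ones. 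A secondary technical point is bookkeeping the nonhomogeneous boundary data: one must confirm that the extensions $\tilde u^{\Gamma_u}$, $\tilde\sigma^{\Gamma_\sigma}$ of Remark~\ref{rmk:TransmissionConditions} produce, after the collapse, precisely the linear form of the corresponding global unbroken problem, so that existence and the stability estimate genuinely follow from \cite{Keith2016} with no loss.
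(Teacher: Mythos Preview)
Your high-level strategy --- collapse to a single global ultraweak problem --- is exactly what the paper does, but two of your key steps fail as written. First, testing each subdomain with its own unbroken space $V^\scF|_{\Omega^\scF}$ does \emph{not} kill the $\hat{b}$ terms: the vanishing $\hat{b}^{\scF_\mesh}|_{\hat U^{\scF_\mesh}\times V^\scF}=0$ holds only on the whole of $\Omega$, because it relies on the test function satisfying the physical boundary condition on all of $\bdry\Omega$. Restricted to a subdomain, the test function is unconstrained on the interdomain interface $\Gamma_I$, so the boundary pairings there survive. In the paper these surviving terms from adjacent subdomains \emph{cancel each other}, and that cancellation is precisely where Lemma~\ref{lem:CharacterizationOfTraces} and the transmission conditions enter --- not to glue trial fields, but to make the combined interface pairings vanish against a \emph{globally} conforming test function $(\tau,v)\in V^\scU$. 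Second, and more seriously, your inf--sup argument invokes ``subdomain-wise inf--sup constants of the broken formulations,'' but these do not exist: a subdomain may satisfy $\Gamma_u\cap\bdry\Omega^\scF=\varnothing$, and in any case no boundary condition is imposed at $\Gamma_I$, so the restricted problems have nontrivial kernels (rigid motions) and zero inf--sup constant. Restricting the trial space to the transmission-constrained subspace $U^\scC$ cannot rescue this, since you are restricting from a product space on which the inf--sup is already zero.

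The paper avoids both issues by never isolating subdomain problems. It chooses a single global $(\tau,v)\in H_{\Gamma_\sigma}(\div,\Omega)\times H^1_{\Gamma_u}(\Omega)$, integrates by parts in each non-ultraweak subdomain to rewrite its bilinear form in ultraweak form, and obtains $b^\scC(\fku^\scC,\fkv^{\scC_\Omega})=b^\scU(\fku^\Omega,\fkv^\Omega)$ with $\fku^\Omega$ assembled in the $L^2$-only ultraweak trial space $U^\scU$ (so no trial-side gluing regularity is needed --- in $\Omega^\scP$ one simply sets $\fku^\Omega|_{\Omega^\scP}=(\sfC{:}\nabla u^\scP,\,u^\scP,\,(\nabla u^\scP)_\Skw)$, and in $\Omega^\scU$ one keeps $(\sigma^\scU,u^\scU,\omega^\scU)$). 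The \emph{global} ultraweak inf--sup then bounds all field components at once; the interface variables are bounded afterward via the trace identities \eqref{eq:traceidentitiesmesh} and the already-established field bounds, exactly as you describe in your injectivity sketch. Your last paragraph's worry about the gluing step is therefore aimed at the wrong object: it is the test function that must be globally conforming, while the assembled trial function lives merely in $L^2(\Omega)$.
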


\begin{proof}
For the sake of consistency and simplicity the main body of the proof applies to the two-subdomain example described throughout this section, which involves the ultraweak and primal formulations.
Then, a few observations will clarify the more general case.

The goal is to prove that there exists a $\gamma^\scC>0$ such that for every $\fku^\scC=(\fku^\scU,\fku^\scP)\in U^\scC$,
\begin{equation*}
	\gamma^\scC\|\fku^\scC\|_{U^\scC}\leq\sup_{\fkv^\scC\in V^\scC}
		\frac{|b^\scC(\fku^\scC,\fkv^\scC)|}{\|\fkv^\scC\|_{V^\scC}}=\|\fku^\scC\|_E\,,
\end{equation*}
where $\|\fku^\scC\|_E=\|B^\scC\fku^\scC\|_{(V^\scC)'}$ with $B^\scC:U^\scC\to(V^\scC)'$ defined by $\langle B^\scC\fku^\scC,\fkv^\scC\rangle_{(V^\scC)'\times V^\scC} = b^\scC(\fku^\scC,\fkv^\scC)$.
Notice that in the previous expression and in what follows, the zero is omitted tacitly from all suprema.

As usual, the approach is to prove this bound for each of the components in $\fku^\scC=(\fku^\scU,\fku^\scP)$, where ${\fku^\scU=(\fku_0^\scU,\hat{\fku}^\scU)}$, $\fku_0^\scU=(\sigma^\scU,u^\scU,\omega^\scU)\in U^\scU|_{\Omega^\scU}$, $\hat{\fku}^\scU=(\hat{u}^\scU,\hat{\sigma}_\nml^\scU)\in\hat{U}^{\scU_\mesh}|_{\Omega^\scU}$, $\fku^\scP=(\fku_0^\scP,\hat{\fku}^\scP)$, $\fku_0^\scP=u^\scP\in U^\scP|_{\Omega^\scP}$ and $\hat{\fku}^\scP=\hat{\sigma}_\nml^\scP\in\hat{U}^{\scP_\mesh}|_{\Omega^\scP}$.
The first step is to find the bounds for the field variables $\fku_0^\scU$ and $\fku_0^\scP$ by somehow avoiding the terms involving the interface variables.
The main idea to achieve this is to collapse \textit{all} formulations to the ultraweak formulation via careful testing and integration by parts, yielding a \textit{global} ultraweak formulation.
This formulation has all the weight of the derivatives on the test function, so it makes sense to consider the \textit{global} ultraweak test functions $\fkv^\Omega=(\tau,v)\in V^\scU=H_{\Gamma_\sigma}(\div,\Omega)\times H^1_{\Gamma_u}(\Omega)$.

From now on, given any tensor, let the subscripts $\Sym$ and $\Skw$ denote its symmetric and antisymmetric parts.
Let $\omega(u^\scP)=(\nabla u^\scP)_\Skw$, $\varepsilon(u^\scP)=(\nabla u^\scP)_\Sym$, and $\sigma(u^\scP)=\sfC\!:\!\nabla u^\scP$, so that $\varepsilon(u^\scP)=\sfS\!:\!\sigma(u^\scP)$.
Thus, $(\nabla u^\scP,\tau)_{\mesh^\scP}=(\sfS\!:\!\sigma(u^\scP),\tau)_{\mesh^\scP}+(\omega(u^\scP),\tau)_{\mesh^\scP}$, and it follows
\begin{equation*}
	\begin{aligned}
		(\sfC\!:\!\nabla u^\scP,\nabla v)_{\mesh^\scP}
			&\!=\!(\sfS\!:\!\sigma(u^\scP),\tau)_{\mesh^\scP}\!+\!(\omega(u^\scP),\tau)_{\mesh^\scP}\!-\!(\nabla u^\scP,\tau)_{\mesh^\scP}
				\!+\!(\sfC\!:\!\nabla u^\scP,\nabla v)_{\mesh^\scP}\\
			&\!=\!(\sfS\!:\!\sigma(u^\scP),\tau)_{\mesh^\scP}\!+\!(\omega(u^\scP),\tau)_{\mesh^\scP}\!+\!(u^\scP,\div\tau)_{\mesh^\scP}
				\!+\!(\sigma(u^\scP),\nabla v)_{\mesh^\scP}\!-\!\langle\tr_{\grad}u^\scP,\tr_{\div}\tau\rangle_{\bdry\mesh^\scP}\,,
	\end{aligned}
\end{equation*}
where integration by parts was valid due to the high regularity of both $u^\scP$ and $\tau$.
Therefore
\begin{equation*}
	b^{\scP_\mesh}|_{\Omega^\scP}(\fku^\scP,\fkv^\scP)=b^\scU|_{\Omega^\scP}(\fku^{\scU_\scP},\fkv^\Omega)
		-\langle\tr_{\grad}u^\scP,\tr_{\div}\tau\rangle_{\bdry\mesh^\scP}
			-\langle\hat{\sigma}_\nml^\scP,\tr_{\grad}v\rangle_{\bdry\mesh^\scP}\,,
\end{equation*}
where $\fku^\scP=(u^\scP,\hat{\sigma}_\nml^\scP)$, $\fku^{\scU_\scP}=(\sigma(u^\scP),u^\scP,\omega(u^\scP))$, $\fkv^\Omega=(\tau,v)$ and $\fkv^\scP=v$.
Trivially it holds that
\begin{equation*}
	b^{\scU_\mesh}|_{\Omega^\scU}(\fku^\scU,\fkv^\scU)=b^\scU|_{\Omega^\scU}(\fku^{\scU_\scU},\fkv^\Omega)
		-\langle\hat{u}^\scU,\tr_{\div}\tau\rangle_{\bdry\mesh^\scU}
			-\langle\hat{\sigma}_\nml^\scU,\tr_{\grad}v\rangle_{\bdry\mesh^\scU}\,,
\end{equation*}
where $\fku^\scU=(\fku_0^\scU,(\hat{u}^\scU,\hat{\sigma}_\nml^\scU))$, $\fku^{\scU_\scU}=\fku_0^\scU=(\sigma^\scU,u^\scU,\omega^\scU)$, $\fkv^\Omega=\fkv^\scU=(\tau,v)$.
Adding the previous two expressions and using the transmission conditions for the displacement and stress (see Remark~\ref{rmk:TransmissionConditions}) along with Lemma~\ref{lem:CharacterizationOfTraces}, it follows that the interface terms vanish, resulting in
\begin{equation*}
	b^\scC(\fku^\scC,\fkv^{\scC_\Omega})=b^\scU(\fku^\Omega,\fkv^\Omega)\,,
\end{equation*}
where $\fku^\scC=((\sigma^\scU,u^\scU,\omega^\scU,\hat{u}^\scU,\hat{\sigma}_\nml^\scU),(u^\scP,\hat{\sigma}_\nml^\scP))\in U^\scC$, $\fku^\Omega=(\sigma^\Omega,u^\Omega,\omega^\Omega)$ is defined by $\fku^\Omega|_{\Omega^\scU}=(\sigma^\scU,u^\scU,\omega^\scU)$ and $\fku^\Omega|_{\Omega^\scP}=(\sigma(u^\scP),u^\scP,\omega(u^\scP))$, $\fkv^\Omega=(\tau,v)$ and $\fkv^{\scC_\Omega}=((\tau,v)|_{\Omega^\scU},v|_{\Omega^\scP})$.
Thus, when testing appropriately, the coupled formulation is essentially a global ultraweak formulation.

The global ultraweak variational formulation is known to be well-posed when $\Gamma_u\neq\varnothing$ (see \cite[Theorem~2.1]{Keith2016}), so that there exists $\gamma^\scU$ such that
\begin{equation*}
	\gamma^\scU\|\fku^\Omega\|_{U^\scU}
		\leq\sup_{\fkv^\Omega\in V^\scU}\frac{|b^\scU(\fku^\Omega,\fkv^\Omega)|}{\|\fkv^\Omega\|_{V^\scU}}
	  	=\sup_{\fkv^\Omega\in V^\scU}\frac{|b^\scC(\fku^\scC,\fkv^{\scC_\Omega})|}{\|\fkv^\Omega\|_{V^\scU}}
	  		\leq\sup_{\fkv^\Omega\in V^\scU}\frac{|b^\scC(\fku^\scC,\fkv^{\scC_\Omega})|}{\|\fkv^{\scC_\Omega}\|_{V^\scC}}
	  			\leq \|\fku^\scC\|_E\,,
\end{equation*}
where it is used that $\|\fkv^{\scC_\Omega}\|_{V^\scC}\leq \|\fkv^\Omega\|_{V^\scU}$.
Naturally, $\|(\sigma^\scU,u^\scU,\omega^\scU)\|_{U^\scU|_{\Omega^\scU}}\leq\|\fku^\Omega\|_{U^\scU}$.
Meanwhile, since $\|\varepsilon(u^\scP)\|_{L^2(\Omega^\scP;\Sym)}\leq\|\sfS\|\|\sigma(u^\scP)\|_{L^2(\Omega^\scP;\Sym)}$, 
it follows that $\|u^\scP\|_{H^1(\Omega^\scP)}\!=\!\|(\varepsilon(u^\scP),u^\scP,\omega(u^\scP))\|_{U^\scU|_{\Omega^\scP}}\leq C_\sfS\|\fku^\Omega\|_{U^\scU}$, where $C_\sfS=\max\{1,\|\sfS\|\}$.
Thus, there exist constants $C^\scU>0$ and $C^\scP>0$, such that
\begin{equation*}
	\|\fku_0^\scU\|_{U^\scU|_{\Omega^\scU}}\leq C^\scU \|\fku^\scC\|_E\,,\qquad\qquad
		\|\fku_0^\scP\|_{U^\scP|_{\Omega^\scP}}\leq C^\scP \|\fku^\scC\|_E\,,
\end{equation*}
for all $\fku_0^\scU=(\sigma^\scU,u^\scU,\omega^\scU)$ and $\fku_0^\scP=u^\scP$.

The last step involves finding the bounds for the interface variables, $\hat{\fku}^\scU=(\hat{u}^\scU,\hat{\sigma}_\nml^\scU)$ and $\hat{\fku}^\scP=\hat{\sigma}_\nml^\scP$.
Consider $\hat{\sigma}_\nml^\scP$ and let $M_0^\scP$ be a continuity bound satisfying $\big|b_0^{\scP_\mesh}|_{\Omega^\scP}(\fku_0^\scP,\fkv^\scP)\big|\leq M_0^\scP \|\fku_0^\scP\|_{U^\scP|_{\Omega^\scP}}\|\fkv^\scP\|_{V^{\scP_\mesh}|_{\Omega^\scP}}$ for all $\fku_0^\scP\in U^\scP|_{\Omega^\scP}$ and $\fkv^\scP\in V^{\scP_\mesh}|_{\Omega^\scP}$.
Then, using the identities in \eqref{eq:traceidentitiesmesh}, it follows
\begin{equation*}
	\begin{aligned}
		\|\hat{\sigma}_\nml^\scP\|_{H^{-\onehalf}_\Pi(\bdry\mesh^\scP)}
			&=\sup_{v\in H^1(\mesh^\scP)}\frac{|\langle\hat{\sigma}_\nml^\scP,\tr_{\grad}v\rangle_{\bdry\mesh^\scP}|}{\|v\|_{H^1(\mesh^\scP)}}
				\leq\sup_{\fkv^\scP\in V^{\scP_\mesh}|_{\Omega^\scP}}
					\frac{\big|b^{\scP_\mesh}|_{\Omega^\scP}(\fku^\scP,\fkv^\scP)-b_0^{\scP_\mesh}|_{\Omega^\scP}(\fku_0^\scP,\fkv^\scP)\big|}
						{\|\fkv^\scP\|_{V^{\scP_\mesh}|_{\Omega^\scP}}}\\
		&\leq \|\fku^\scC\|_E+M_0^\scP\|\fku_0^\scP\|_{U^\scP|_{\Omega^\scP}}
			\leq \big(1+M_0^\scP C^\scP\big)\|\fku^\scC\|_E\,.
	\end{aligned}				
\end{equation*}
Similar calculations hold for $\hat{u}^\scU$ and $\hat{\sigma}_\nml^\scU$.
Summing the contributions from $\fku_0^\scU$, $\hat{\fku}^\scU$, $\fku_0^\scP$ and $\hat{\fku}^\scP$, yields the desired constant $C^\scC=\frac{1}{\gamma^\scC}>0$ satisfying $\|\fku^\scC\|_{U^\scC}\leq C^\scC \|\fku^\scC\|_E$ for every $\fku^\scC=((\fku_0^\scU,\hat{\fku}^\scU),(\fku_0^\scP,\hat{\fku}^\scP))\in U^\scC$.

The more general case follows analogously, but some technicalities, mostly arising form the weak imposition of symmetry, are worth mentioning.
To observe the changes, it suffices to consider the two-subdomain case involving the strong and ultraweak formulations.
In this case, a similar procedure as before yields
\begin{equation*}
	b^\scC(\fku^\scC,\fkv^{\scC_\Omega})=b^\scU(\fku^\Omega,\fkv^\Omega)+(\sigma^\scS_\Skw,\nabla v)_{\Omega^\scS}\,,
\end{equation*}
where $\fku^\scC=((\sigma^\scS,u^\scS),(\sigma^\scU,u^\scU,\omega^\scU,\hat{u}^\scU,\hat{\sigma}_\nml^\scU))\in U^\scC$, $\fku^\Omega=(\sigma^\Omega,u^\Omega,\omega^\Omega)$ is defined by $\fku^\Omega|_{\Omega^\scS}=(\sigma^\scS_\Sym,u^\scS,\omega(u^\scS))$ and $\fku^\Omega|_{\Omega^\scU}=(\sigma^\scU,u^\scU,\omega^\scU)$, $\fkv^\Omega=(\tau,v)$ and $\fkv^{\scC_\Omega}=((\sfS\!:\!\tau,v,0)|_{\Omega^\scS},(\tau,v)|_{\Omega^\scU})$.
Thus, in order to obtain the bound of $\|\fku_0^\scS\|_{U^\scS|_{\Omega^\scS}}=\|(\sigma^\scS,u^\scS)\|_{H(\div,\Omega^\scS)\times H^1(\Omega^\scS)}$ in terms of $\|\fku^\scC\|_E$, it is enough to bound $\|\sigma^\scS_\Skw\|_{L^2(\Omega^\scS;\Skw)}$, $\|\div\sigma^\scS\|_{L^2(\Omega^\scS)}$ and $\|\varepsilon(u^\scS)\|_{L^2(\Omega^\scS;\Sym)}$ in terms of $\|\fku^\scC\|_E$.
These bounds are easily obtained through a careful choice of test functions in $\Omega^\scS$.
With these facts, the astute reader can deduce the proof for any other relevant and general scenario.
\end{proof}

\begin{remark}
As mentioned before, in this work, the variational formulations of linear elasticity avoid the strong imposition of tensor symmetry in some of the spaces.
This, among other reasons, adds a layer of complexity to the formulations and the corresponding proofs, which typically need a few extra calculations.
However, these difficulties are not present in many other important equations.
Indeed, a simpler version of this proof can easily be applied to coupled formulations of Poisson's equation, time-harmonic Maxwell's equations (see \cite{BrokenForms15} for multiple formulations) and the diffusion-convection-reaction equation (see \cite{DemkowiczClosedRange} for multiple formulations), among others.
\end{remark}

\begin{remark}
{
The stability constant in the proof of the theorem, $\gamma^\scC$, depends on the distribution of the variational formulations and the shape of the subdomains.
The constant will remain robust with respect to heterogeneous material properties as long as each formulation is robust when viewed independently.
Thus, the stability constant will remain bounded above as long as any near or fully incompressible elastic behavior is limited to subdomains associated to robustly well-posed variational formulations (i.e.~broken ultraweak and mixed formulations).} 
\end{remark}

\section{The DPG methodology}
\label{sec:DPG}

The DPG methodology is a minimum residual finite element method.
Minimum residual methods are posed in the most general sense by considering an abstract linear variational formulation as in \eqref{eq:bilinearFormEQ}, and noting that it can be restated as an operator equation,
\begin{equation}
\label{eq:linearFormEQ}
 \left\{
  \begin{aligned}
   &\text{Find } \fku\in U\,,\\
   &B\fku = \ell\,,
  \end{aligned}
 \right.
\end{equation}
where $B:U\to V'$ is a linear continuous operator defined by $\langle B\fku,\fkv\rangle_{V'\times V} = b(\fku,\fkv)$.
Then, once a discrete trial space $U_h\subseteq U$ is chosen, the method simply seeks the minimizer of the residual,
\begin{equation}
	\fku_h = \argmin_{\fku\in U_h} \|B\fku-\ell\|^2_{V'}\,.
	\label{eq:MinResidual}
\end{equation}
This minimization problem can be recast in several equivalent ways. 
Amongst them are those that seek $\fku_h\in U_h$ such that
\begin{subequations}
	\begin{align}
		\big(B\fku_h,B\delta\fku\big)_{V'}&=\big(\ell,B\delta\fku\big)_{V'} \,,\label{eq:DPGMinVirgin}\\
		\big\langle B\fku_h,R_V^{-1}B\delta\fku\big\rangle_{V'\times V}&=
			\big\langle\ell,R_V^{-1}B\delta\fku\big\rangle_{V'\times V}\,,\label{eq:DPGMinOptTest}\\
		\big\langle\delta\fku,B^\dagger R_V^{-1}B\fku_h\big\rangle_{U\times U'}&=
			\big\langle\delta\fku,B^\dagger R_V^{-1}\ell\big\rangle_{U\times U'}\,,\label{eq:DPGMinImplement}
	\end{align}
\end{subequations}
for all $\delta\fku\in U_h$.
Here $R_V:V\to V'$ is the Riesz map of $V$, which is defined by $\langle R_V\fkv,\delta\fkv\rangle_{V'\times V}=(\fkv,\delta\fkv)_V$ for all $\fkv,\delta\fkv\in V$, and which is known to be an isometric isomorphism.
Meanwhile, $B^\dagger:V\to U'$ is the (conjugate) transpose of $B$, which is defined by $\langle \fku,B^\dagger \fkv\rangle_{U\times U'}=b(\fku,\fkv)=\langle B\fku,\fkv\rangle_{V'\times V}$.
With the same notation, the minimum residual value over $U_h$ can be expressed as,
\begin{equation}
  \|B\fku_h-\ell\|_{V'}^2=\big\langle B\fku_h-\ell,R_V^{-1}(B\fku_h-\ell)\big\rangle_{V'\times V}
  	=\big\langle\fku_h-\fku,B^\dagger R_V^{-1}(B\fku_h-\ell)\big\rangle_{U\times U'}\,,
  \label{eq:ExactResidual}
\end{equation}
where $\fku\in U$ is the exact solution satisfying $B\fku=\ell$.
Letting $\delta\fkv=R_V^{-1}B\delta\fku\in R_V^{-1}BU_h=V^\opt$, one can reinterpret \eqref{eq:DPGMinOptTest} as a fully discrete variational formulation with bilinear and linear forms $b|_{U_h\times V^\opt}$ and $\ell|_{V^\opt}$.
This formulation has the paramount property that, despite being posed over finite-dimensional spaces ($\dim(U_h)=\dim(V^\opt)<\infty$) it shares the same stability properties as the original variational formulation, which is the best one can hope for \cite{DPGOverview}.
These are the reasons why the space $V^\opt$ is called the optimal test space, and why these minimum residual methods are sometimes referred to as \textit{the} optimal Petrov-Galerkin methods.

Unfortunately, any attempt at solving the variational systems inevitably encounters the exact inversion of the Riesz map of the test space, $R_V^{-1}$, which is typically impossible to manage numerically due to the infinite-dimensional nature of $V$.
Thus, the idea is to consider an inversion of the Riesz map over a large, yet still finite-dimensional subspace of $V$, called the enriched test space, $V^\enr\subseteq V$, which satisfies that $\dim(V^\enr)\geq\dim(U_h)$.
In this case, the approximate optimal test space $V^\opt_h=R_{V^\enr}^{-1}BU_h$ aims to be as close as possible to the actual optimal test space, $V^\opt$.
The larger $\dim(V^\enr)$ is, the closer $V^\opt_h$ is from $V^\opt$. 
In practice, it is more convenient to solve the discrete operator equation in \eqref{eq:DPGMinImplement},
\begin{equation}
	B^\dagger R_{V^\enr}^{-1}B\fku_h=B^\dagger R_{V^\enr}^{-1}\ell\,,
	\label{eq:DiscreteNormalEq}
\end{equation}
where numerically speaking, $(B)_{ij}=b(\fku_j,\fkv_i)$, $(\ell)_i=\ell(\fkv_i)$, and $(R_{V^\enr})_{ij}=(\fkv_i,\fkv_j)_V$, with $(\fku_j)_{j=1}^{\dim(U_h)}$ and $(\fkv_i)_{i=1}^{\dim(V^\enr)}$ being bases for $U_h$ and $V^\enr$ respectively, and $(\fku_h)_j$ being the vector of coefficients of the solution $\fku_h$ with respect to the $(\fku_j)_{j=1}^{\dim(U_h)}$ basis.
Notice it is not necessary to find a basis for the approximate optimal test space, $V^\opt_h$, since this is implicitly calculated within \eqref{eq:DiscreteNormalEq} from a basis of the enriched test space $V^\enr$, $(\fkv_i)_{i=1}^{\dim(V^\enr)}$.
Typically, Petrov-Galerkin methods require that one produce an explicit basis for the ``exotic'' discrete test space being proposed (in this case being $V^\opt_h$), but this method is distinguished from those in that it essentially \textit{calculates} such a basis automatically from a ``standard'' (yet large) basis of a more commonplace discretization of the test space (i.e.~that of $V^\enr$).
Moreover, notice the stiffness matrix associated to the problem, $B^\dagger R_{V^\enr}^{-1}B=(R_{V^\enr}^{-\onehalf}B)^\dagger (R_{V^\enr}^{-\onehalf}B)$, is a symmetric (or Hermitian) positive definite matrix, which is extremely convenient.
In fact, the unique Cholesky decomposition, $R_{V^\enr}=LL^\dagger$, where $L$ is a lower triangular matrix with positive diagonal entries, implies that $L$ acts similar to a square root of $R_{V^\enr}$, with $R_{V^\enr}^{-1}=L^{-\dagger}L^{-1}$ and
\begin{equation}
	B^\dagger R_{V^\enr}^{-1}B=(L^{-1}B)^\dagger(L^{-1}B)\,.
	\label{eq:CholeskyStiffness}
\end{equation}
The key here is that $L^{-1}B$ is computed using forward substitution, and then it is simply premultiplied with its own (conjugate) transpose to produce the stiffness matrix in \eqref{eq:CholeskyStiffness}.
Computationally, this can be made more efficient than other alternatives, such as computing $R_{V^\enr}^{-1}B=L^{-\dagger}L^{-1}B$ and then premultiplying by $B^\dagger$.
Lastly, the discrete minimum residual can be computed by use of the expression
\begin{equation}
	\|B\fku_h-\ell\|_{(V^\enr)'}^2=(B\fku_h-\ell)^\dagger R_{V^\enr}^{-1}(B\fku_h-\ell)
		=(L^{-1}(B\fku_h-\ell))^\dagger(L^{-1}(B\fku_h-\ell))\,.
	\label{eq:DiscreteResidual}
\end{equation}

Regrettably, the inversion of the Riesz map of $V^\enr$ still represents a global problem over the whole domain, and this increases the numerical cost of the method greatly, to the point that it appears impractical.
In this sense, the use of broken test spaces comes to the rescue.
Indeed, by tackling broken variational formulations that make use of broken test spaces, the computation of the inverse of the Riesz map is decoupled and can be performed locally at each element.
In fact, the equations in \eqref{eq:DiscreteNormalEq} can be solved locally and then assembled into a global stiffness matrix, with the local stiffness matrix being constructed efficiently by making use of the decomposition in \eqref{eq:CholeskyStiffness}.
The same argument applies to the residual in \eqref{eq:DiscreteResidual}, which can now be computed at each element, and can be used to drive an adaptivity scheme since globally it is expected to decrease as $U_h$ is refined.
The cost of using broken test spaces is that broken variational formulations have new interface variables within the trial space which adds to the number of degrees of freedom, but, fortunately, they only act on the boundary of the elements of the mesh.
The optimal Petrov-Galerkin methods coming from this minimum residual approach, coupled with the use of broken test spaces, is referred to as the DPG methodology.

\begin{remark}
\label{rmk:ExactSequence}
Usually variational formulations make use of the typical Hilbert spaces described in Section~\ref{sec:energyspaces} along with the $H(\curl,K)$ spaces and their variants.
The basic underlying spaces are related to each other through an exact sequence, which in three dimensions is
\begin{equation*}
	H^1(K) \xrightarrow{\,\,\nabla\,\,} H(\curl,K) \xrightarrow{\nabla\times} H(\div,K) \xrightarrow{\,\nabla\cdot\,} L^2(K) \, .
\end{equation*}
Fortunately, for the typical element shapes (i.e.~tetrahedra, hexahedra, triangular prisms and pyramids), it is possible to find discrete analogues of these spaces that satisfy the same exact sequence property and a family of interpolation operators that commute with the sequence above \cite{hpbook2,Fuentes2015,Cockburn16}.
Moreover, the discrete sequence of spaces can be made to have an associated polynomial order, $p$, so that it more closely approximates the infinite-dimensional sequence as $p$ grows.
Lastly, the traces of the discrete sequence of spaces form a sequence themselves and serve as discretizations to the trace Hilbert spaces defined in Section~\ref{sec:energyspaces} \cite{Fuentes2015}.
For those reasons, in practice, the discretizations of the trial and test spaces are drawn from these discrete sequences of spaces.
In fact, the trial spaces, $U_h$, are derived from a sequence of order $p$, while the larger test spaces, $V^\enr$, are taken from an enriched sequence of order $p+\mathrm{d}p$, where $\mathrm{d}p$ represents an enrichment parameter.
For the trial spaces, one must be careful when choosing shape functions that form bases for the discrete spaces of a given element, since they must be compatible at the faces with shape functions coming from neighboring elements (possibly of different shapes).
Fortunately, some of those carefully constructed bases can be found in the literature, and for the computations used in this work, the shape functions spanning the exact sequence spaces were chosen from \cite{Fuentes2015}.
Regarding $\mathrm{d}p$, this parameter should be large enough such that the approximate test space, $V^\opt_h$, is close enough to the actual optimal test space, $V^\opt$, to ensure that the underlying numerical method is stable. 
Theoretically, this is always possible, but might involve a very large parameter $\mathrm{d}p$, making the local computations of the method considerably less efficient.
Thankfully, a parameter of $1\leq\mathrm{d}p\leq3$ is usually sufficient in practice \cite{gopalakrishnan2014analysis,BrokenForms15}, and in fact in this work a value of $\mathrm{d}p=1$ was sufficient to ensure numerical stability.
Finally, for all trial variables derived from a sequence of order $p$ (including interface variables), standard norm interpolation estimates of the form $Ch^p$ are guaranteed, where $C$ is a constant and $h$ represents the mesh size.
This implies convergence rates of the same order given numerical stability.
\end{remark} 

\begin{remark}
\label{rmk:L2Optimization}
{When \textit{part} of the test space is some version of $L^2$ with its standard norm, the inverse of that component of the Riesz map is actually known exactly and its computation can be avoided altogether.}
This allows one to obtain a more precise approximate optimal test space.
The details on improving the implementation of these scenarios can be found in \cite[Section 4.3, Remark 4.6]{Keith2016}.
Indeed, when the \textit{full} test space is a version of $L^2$ one can manage to exactly reproduce the optimal test space, and this is generally referred to in the literature as a first order system least squares (FOSLS) formulation.
\end{remark}

\begin{remark}
It must be noted that, in theory, the solution converges optimally in the problem-dependent energy norm, $\|\cdot\|_{E}=\|B(\cdot)\|_{V'}$, since $\|\fku_h-\fku\|_{E}=\|B\fku_h-\ell\|_{V'}$ is precisely the residual that is being minimized.
Thus, the behavior of the convergence is in some sense dependent on the norm of the test space $\|\cdot\|_V$.
In practice, the choice of the test norm can have profound repercussions in the effectiveness with which $V^\opt_h$ approximates $V^\opt$, as well as in the number of refinements required to achieve a desired error bound of a particular variable.
However, in this work, for the purpose of simplicity, the norms of the broken test spaces simply come from the standard norms in \eqref{eq:BrokenSpacesAndNorms}.
It would be interesting to investigate the advantages of using more unusual Hilbert norms for the same test spaces.
\end{remark}
\section{Results and discussion} 
\label{sec:results}

In this section, two illustrative examples involving coupled variational formulations as introduced in Section~\ref{sec:coupled} were solved using the DPG methodology detailed in Section~\ref{sec:DPG}.
First, a smooth manufactured solution on a cube with uniform and contrived material data involving four distinct variational formulations was considered.
Then, a more physically motivated and challenging example was tackled: a sheathed hose with large material and layer-thickness contrast, and with one layer composed of a fully incompressible material.
In both of these scenarios, exact solutions were derived, so that convergence was accurately displayed based on the chosen measure of error.

It should be emphasized to the reader that despite all variables and equations having been nondimensionalized, the complications of multiple scales in the formulations and associated norms are not alleviated.
This issue may drastically affect practical computations, but since the examples are given only for an illustration of feasibility, a detailed study of the scaling was overlooked in this work.
Indeed, the formulations used as building blocks for the coupled formulations were taken directly from \eqref{eq:BrokVarFormStrong}--\eqref{eq:BrokVarFormPrimal}, and the norms used were the standard norms in \eqref{eq:BrokenSpacesAndNorms}.
The convergence analysis presented here is limited to uniform mesh refinements.
In fact, the traditional DPG-style residual-derived adaptive mesh refinements (see \eqref{eq:DiscreteResidual}) are out of place here, since the use of the standard norms introduces a subdomain bias in the residual estimates.
For this reason, in the future, a more careful analysis involving the choice of norms is required to anticipate refinement patterns which adequately adapt to solution features.


All computations were performed with the finite element software \textit{hp3d} which has support for local $h$ and $p$ refinements \cite{hpbook,hpbook2}, high order exact sequence shape functions for elements of various shapes \cite{Fuentes2015}, sophisticated multi-physics support (facilitating the global assembly necessary to make the trial space identification for coupled formulations as suggested by Remark~\ref{rmk:TransmissionConditions}), projection-based interpolation of the entire exact sequence \cite{demkowicz2008polynomial} (to enforce non-homogeneous boundary conditions), and the ability to handle isoparametric geometries with curvilinear inherited interior refinements \`{a} la local transfinite interpolation. 
This last feature, allowed to produce uniform refinements of the very thin outer layer of the sheathed hose described in the second example.
In both examples only hexahedral elements were utilized, with the discrete trial and test spaces taken as described in Remark~\ref{rmk:ExactSequence}, where a value of $\mathrm{d}p=1$ was used for all computations.
Locally, the discretizations were identical to those discussed in \cite[Section~5]{Keith2016}, and in the global assembly of the coupled formulations, continuity between neighboring subdomains of variables in $H^1$, $H(\div)$, $H^{1/2}$ and $H^{-1/2}$ was enforced as discussed in Remark~\ref{rmk:TransmissionConditions}.

%

\subsection{Cube domain} 
\label{sec:manufactured_solution}

\begin{figure}[!ht]
 	\centering
  \includegraphics[trim=0cm 0cm 0cm 0cm,clip=true,scale=0.39]{./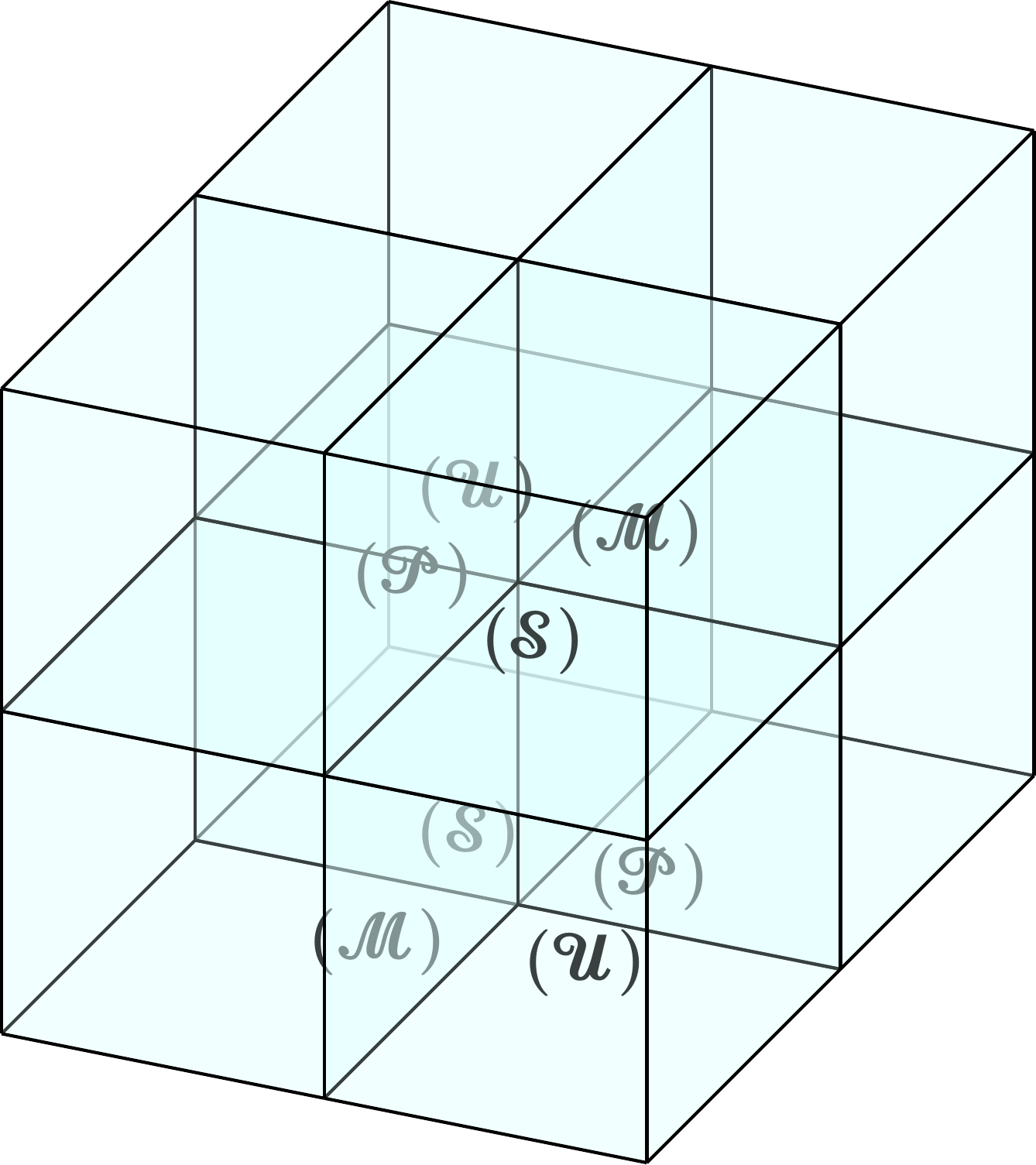}
  \caption{
	Illustration of the geometry and arrangement of subdomains used for a coupled formulation with which the code was verified via manufactured solutions. 
	}
  \label{fig:cubic-domain}
\end{figure}

\noindent As alluded previously, a smooth manufactured solution was taken for the displacement. 
It was a simple sinusoidal vector field,
\begin{equation}
    u_i(x_1,x_2,x_3)=\sin(\pi x_1)\sin(\pi x_2)\sin(\pi x_3)\,, \qquad i=1,2,3,
    \label{eq:exactdisplacementcube}
\end{equation}
on the cubic domain $\Omega=(0,2)^3$. 
The material was considered to be isotropic and homogeneous with nondimensionalized Lam\'e parameters $\lambda=\mu=1$.
Then, partitioning the domain into eight equally sized unit cube subdomains, a configuration was formed for which four distinct broken formulations interact with each other.
More specifically, the strong ($\scS$), ultraweak ($\scU$), mixed ($\scM$) and primal ($\scP$) broken formulations (see \eqref{eq:BrokVarFormStrong}--\eqref{eq:BrokVarFormPrimal}) were organized such that there is at least one face that is a common interface between each of the possible pairs of formulations, as shown in Figure~\ref{fig:cubic-domain}.
The strong and mixed formulations were solved more efficiently as indicated in Remark~\ref{rmk:L2Optimization}.
Finally, the displacement boundary data was prescribed along the whole boundary of $\Omega$.


To analyze convergence, only the $L^2(\Omega)$ error of the displacement was considered and as Figure~\ref{fig:cube-uniform-displacement-error} demonstrates, $p$-th order (or better) convergence rates were witnessed for $p$-order schemes for $1\leq p\leq 5$. 
This is consistent with the theoretical expectations as dictated by Remark~\ref{rmk:ExactSequence}.

\begin{figure}[!ht]
 	\centering
  \includegraphics[trim=0cm 0cm 0cm 0cm,clip=true,scale=0.39]{./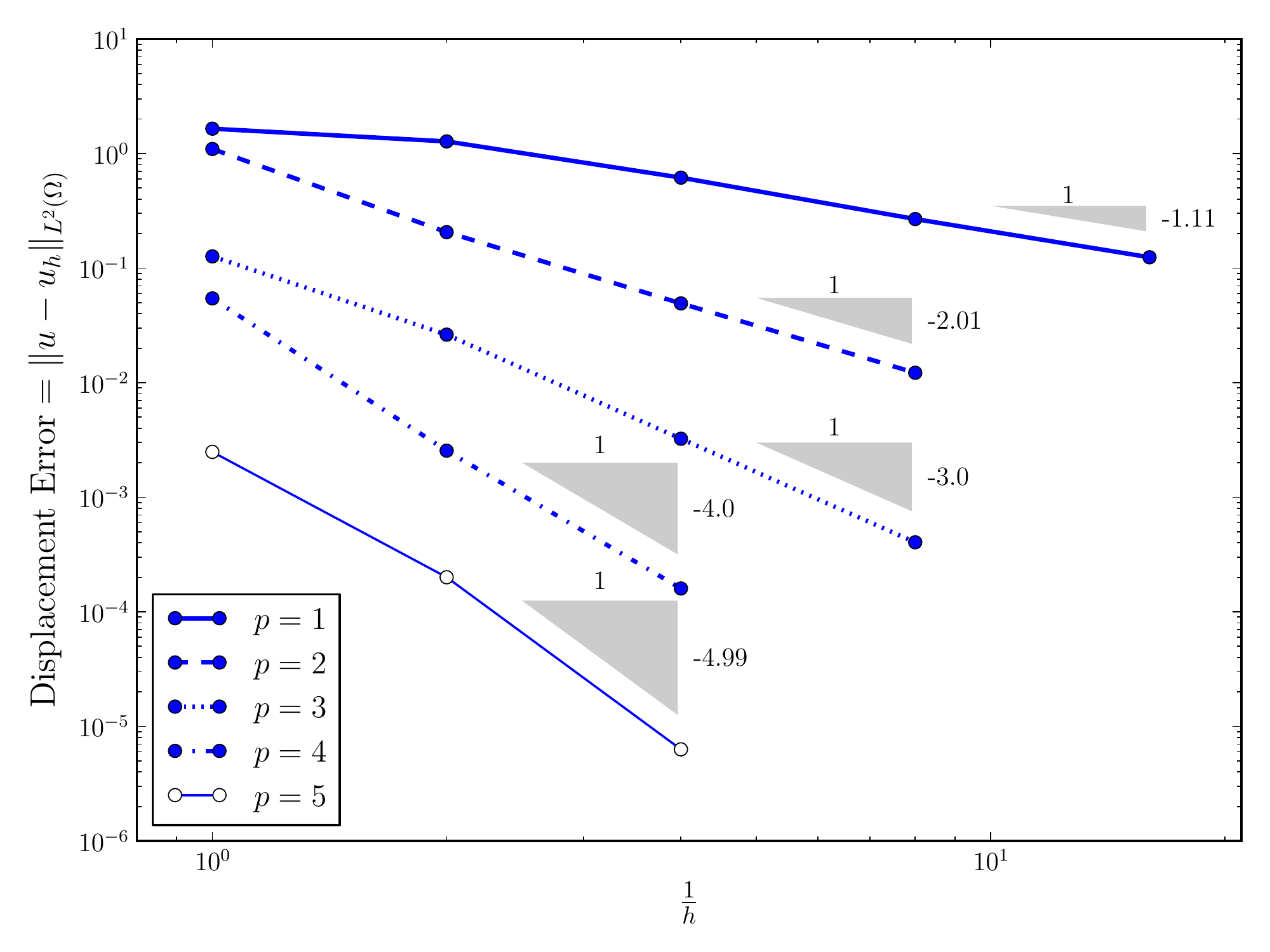}
  \caption{Displacement error as a function of the mesh size under uniform hexahedral refinements in the cubic domain $\Omega=(0,2)^3$ with a sinusoidal manufactured solution.}
  \label{fig:cube-uniform-displacement-error}
\end{figure}


\subsection{Sheathed hose} 
\label{sec:SheathedHose}

\begin{figure}[!ht]
 	\centering
  \includegraphics[trim=0cm 0cm 0cm 0cm,clip=true,scale=0.67]{./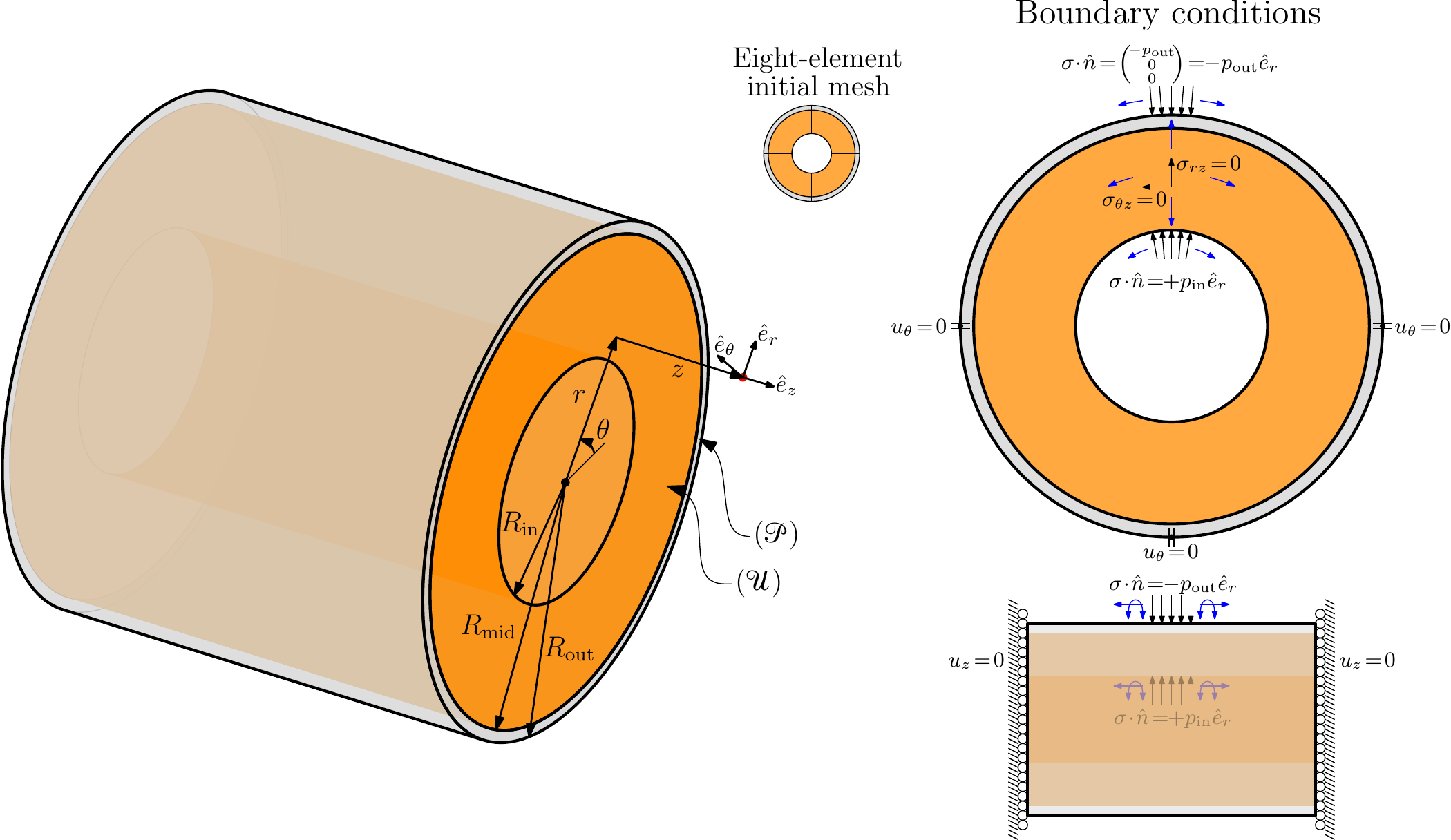}
  \caption{Diagram of the sheathed hose problem with the configuration of variational formulations per subdomain and a schematic of the boundary conditions used.}
  \label{fig:SheathedHose}
\end{figure}

\noindent As a second example, a rubber hose (hollow cylinder) sheathed by a layer of steel was considered as illustrated in Figure~\ref{fig:SheathedHose}.
{
This is a more physically-relevant example, because similar configurations are used in an array of applications including high-performance racing engine hoses and high-pressure hydraulic oil hoses (see SAE hydraulic hose standards), which have a steel braided outer sleeve.
A variation of the example may also be pertinent to stents inside an artery.
}

To simulate balanced axial stresses which would appear in an infinite tube, the axial faces were confined by vanishing normal direction (axial) displacement boundary conditions and zero traction boundary conditions in the tangential directions as depicted in Figure~\ref{fig:SheathedHose}.
Moreover, normal pressure distributions were placed on the inner rubber surface, $p_\mathrm{in}(\theta,z)$ at $R_\mathrm{in}$, and on the outer steel surface, $p_\mathrm{out}(\theta,z)$ at $R_\mathrm{out}$, where $\theta$ represents the azimuthal direction and $z$ represents the axial direction.
Lastly, to have a supported structure with a fixed origin, three non-collinear points of one of the axial faces had their azimuthal displacement set to zero.

The thickness of the outer steel layer, $R_\mathrm{out}-R_\mathrm{mid}$, was assumed to be much smaller than the thickness of the rubber layer, $R_\mathrm{mid}-R_\mathrm{in}$.
This implied the use of very thin elements provided each layer was discretized by the same number of elements in the radial direction, so that shear locking could have been a concern.
Additionally, the rubber was taken to be the demanding case of a fully incompressible material, so that volumetric locking also had to be avoided.
In principle, this makes the problem particularly challenging to solve, and therefore constitutes an ideal testing ground for the method being analyzed.
Fortunately, the use of coupled variational formulations was extremely convenient, because, at least with regard to volumetric locking, all one needed to do was to choose a robustly well-posed variational formulation for the rubber subdomain coupled with a more efficient formulation in the steel subdomain, where robustness with respect to the material properties was not an issue.
In fact, as shown in Figure~\ref{fig:SheathedHose}, the broken ultraweak formulation ($\scU$) was chosen for the rubber, while the broken primal formulation ($\scP$) was chosen for the steel.  

For reference, the steel had a Young's modulus of $E_\mathrm{S} = 200\,\mathrm{GPa}$ and a Poisson's ratio of $\nu_\mathrm{S} = 0.285$, and the rubber had a Young's modulus of $E_\mathrm{R} = 0.01\,\mathrm{GPa}$ and a Poisson's ratio of $\nu_\mathrm{R} = 0.5$.
Then, the Lam\'e parameters were easily {calculated} using the formulas $\lambda=\frac{E\nu}{(1+\nu)(1-2\nu)}$ and $\mu=\frac{E}{2(1+\nu)}$.
Meanwhile, the radii used were $R_\mathrm{in}=0.5\,\mathrm{m}$, $R_\mathrm{mid}=0.99\,\mathrm{m}$ and {$R_\mathrm{out}=1.0\,\mathrm{m}$}.

\subsubsection{Uniform pressure distribution}
\label{sec:uniformpressure}

For code verification, a one-dimensional problem was essentially solved in three dimensions.
Indeed, uniform pressure distributions were assumed to hold inside and outside, with values $p_\mathrm{in}=1\,\mathrm{MPa}$ and $p_\mathrm{out}=0\,\mathrm{MPa}$ respectively, so that they were independent of the azimuthal and axial directions, along with all the mechanics of the problem (i.e. $\pder{u}{\theta}=0$ and $\pder{u}{z}=0$).
The remaining boundary conditions at the axial faces also implied that $u_\theta=0$ and $u_z=0$.
Thus, the exact solution was derived from the ansatz that all nonvanishing physical variables were functions only of the radial direction, $r$.
With these assumptions, the linear elasticity problem with no external volumetric forces reduces to the scalar equation
\begin{equation}
\label{eq:1DLinElastReduction}
	\frac{1}{r}\frac{\dd}{\dd r}(r\sigma_{rr})- \frac{1}{r}\sigma_{\theta\theta} = 0\,,
\end{equation}
with boundary conditions, $(\sigma\!\cdot\!\hat{n}(R_\mathrm{in}))_r=-\sigma_{rr}(R_\mathrm{in})=p_\mathrm{in}$ and $(\sigma\!\cdot\!\hat{n}(R_\mathrm{out}))_r=\sigma_{rr}(R_\mathrm{out})=-p_\mathrm{out}$.
For the steel, the nonzero stress components are
\begin{equation}
	\begin{gathered}
 		\sigma_{rr}= (2\mu_{\mathrm{S}}+\lambda_{\mathrm{S}})\frac{\dd u_r}{\dd r} + \lambda_{\mathrm{S}}\frac{u_r}{r}\,,\qquad
 		\sigma_{\theta\theta}= (2\mu_{\mathrm{S}}+\lambda_{\mathrm{S}})\frac{u_r}{r} + \lambda_{\mathrm{S}}\frac{\dd u_r}{\dd r}\,,\\
 		\sigma_{zz}= \lambda_{\mathrm{S}}\Big(\frac{\dd u_r}{\dd r} + \frac{u_r}{r}\Big)\,,
 	\end{gathered}
\end{equation}
while for the rubber there is the additional incompressibility equation $\div(u)=\tder{u_r}{r}+\frac{u_r}{r}=0$ and the stress components are
\begin{equation}
	\begin{gathered}
 		\sigma_{rr}= 2\mu_{\mathrm{R}}\frac{\dd u_r}{\dd r}-p_0\,,\qquad
 		\sigma_{\theta\theta}=2\mu_{\mathrm{R}}\frac{u_r}{r}-p_0\,,\\
 		\sigma_{zz}= -p_0\,,
 	\end{gathered}
\end{equation}
for some constant, $p_0\in\R$.

This boundary value problem has the general solution
\begin{equation}
	u_r(r)=
		\begin{cases}
			Ar^{-1}&\quad\text{if }\,R_{\mathrm{in}}\leq r\leq R_{\mathrm{mid}}\,,\\
			Br+Cr^{-1}&\quad\text{if }\,R_{\mathrm{mid}}\leq r\leq R_{\mathrm{out}}\,,
		\end{cases}
\end{equation}
where the range $R_{\mathrm{in}}\leq r\leq R_{\mathrm{mid}}$ represents the rubber and the range $R_{\mathrm{mid}}\leq r\leq R_{\mathrm{out}}$ represents the steel.
Upon matching displacements and tractions at the interface, $R_{\mathrm{mid}}$, and applying the boundary conditions, the constants $A$, $B$, $C$ and $p_0$ in their general form are determined to be, 
\begin{equation}
\begin{aligned}
 	A &= \frac{1}{2d}
		\Big(-p_\mathrm{in}\big(\mu_\mathrm{S}R_\mathrm{mid}^2+(\lambda_\mathrm{S}+\mu_\mathrm{S})R_\mathrm{out}^2\big)
			+p_\mathrm{out}(\lambda_\mathrm{S}+2\mu_\mathrm{S})R_\mathrm{out}^2\Big)R_\mathrm{mid}^2 R_\mathrm{in}^2\,,\\
 	B &= \frac{1}{2d} 
 		\Big(-p_\mathrm{in}\mu_\mathrm{S}R_\mathrm{in}^2R_\mathrm{mid}^2-
 			p_\mathrm{out}\big((\mu_\mathrm{R}-\mu_\mathrm{S})R_\mathrm{in}^2-\mu_\mathrm{R}R_\mathrm{mid}^2\big)R_\mathrm{out}^2\Big)\,,\\
 	C &= \frac{1}{2d} \Big(-p_\mathrm{in}(\lambda_\mathrm{S}+\mu_\mathrm{S}) R_\mathrm{in}^2 
 		+p_\mathrm{out}\big((\lambda_\mathrm{S}+\mu_\mathrm{R}+\mu_\mathrm{S})R_\mathrm{in}^2
 			-\mu_\mathrm{R}R_\mathrm{mid}^2\big)\Big)R_\mathrm{mid}^2 R_\mathrm{out}^2\,,\\
	p_0 &= \frac{1}{d}\Big(p_\mathrm{in}\big((\mu_\mathrm{R}-\mu_\mathrm{S})(\lambda_\mathrm{S}+\mu_\mathrm{S}) R_\mathrm{out}^2
 			+\mu_\mathrm{S}(\lambda_\mathrm{S}+\mu_\mathrm{R}+\mu_\mathrm{S})R_\mathrm{mid}^2\big)R_\mathrm{in}^2\\
 				&\qquad\qquad\qquad\qquad\qquad\qquad\qquad\qquad\qquad
 					-p_\mathrm{out}\mu_\mathrm{R}(\lambda_\mathrm{S}+2\mu_\mathrm{S})R_\mathrm{out}^2R_\mathrm{mid}^2\Big)\,,\\
 	d &= \big((\mu_\mathrm{R}-\mu_\mathrm{S})(\lambda_\mathrm{S}+\mu_\mathrm{S})R_\mathrm{out}^2
 		+\mu_\mathrm{S}(\lambda_\mathrm{S}+\mu_\mathrm{R}+\mu_\mathrm{S})R_\mathrm{mid}^2\big)R_\mathrm{in}^2\\
 			&\qquad\qquad\qquad\qquad\qquad\qquad
 				-\mu_\mathrm{R}\big(\mu_\mathrm{S} R_\mathrm{mid}^2
 					+(\lambda_\mathrm{S}+\mu_\mathrm{S})R_\mathrm{out}^2\big)R_\mathrm{mid}^2\,.
\end{aligned}
\end{equation}

\vspace{-0.25cm}

\begin{figure}[!ht]
 	\centering
  \includegraphics[trim=0cm 0cm 0cm 0cm,clip=true,scale=0.39]{./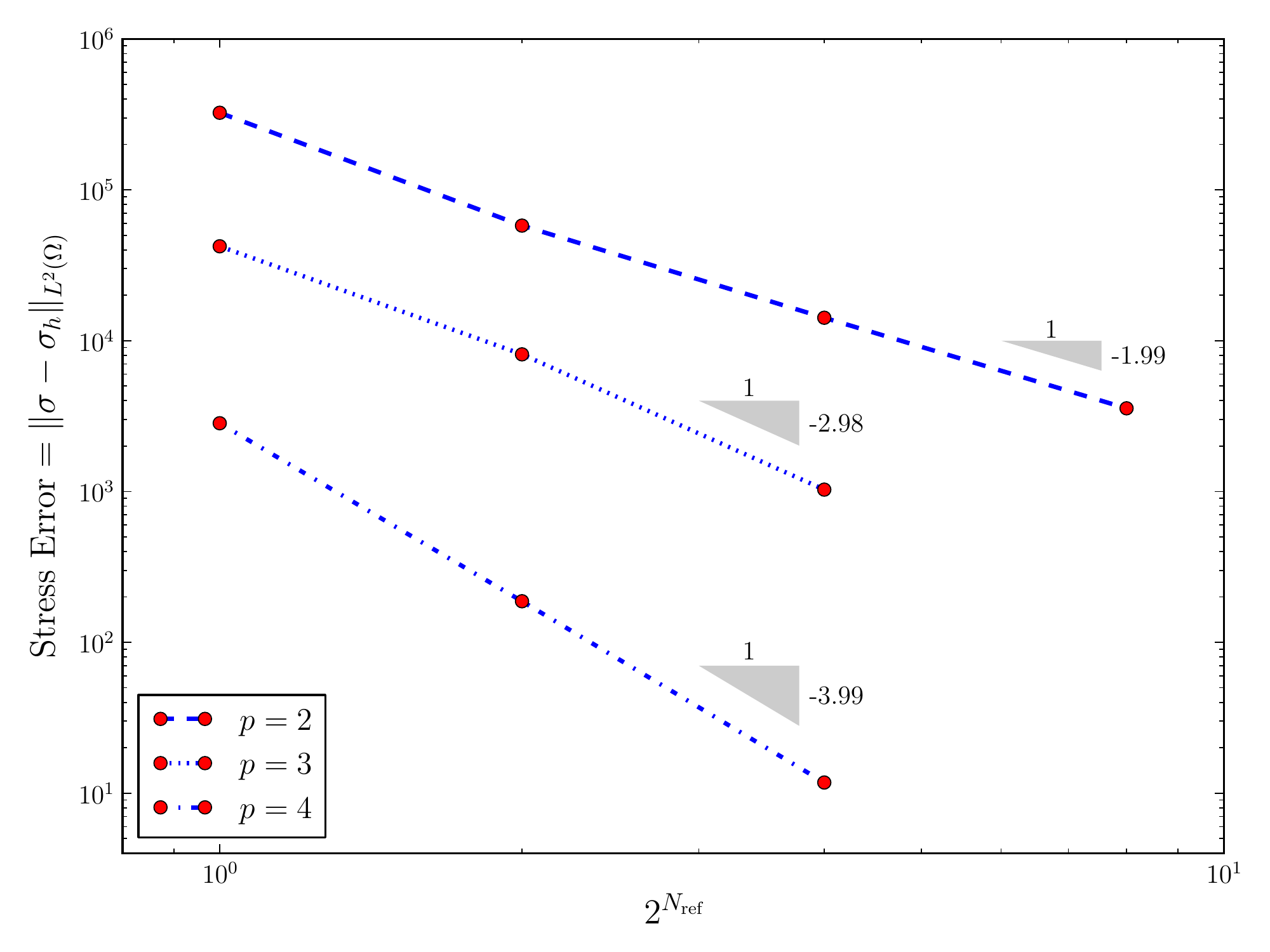}
  \caption{Stress error (in $\mathrm{Pa}$) as a function of the number of uniform refinements, $N_{\mathrm{ref}}$. The value of $p=1$ was not shown because the approximating isoparametric geometry was too inaccurate for the initial meshes.}
  \label{fig:high-material-contrast-uniform-error}
\end{figure}

\vspace{0.25cm}
Instead of showing convergence of the displacement error as in the previous example, here the convergence of the stress was presented.
For this, the $L^2(\Omega)$ error of the variable $\sigma_h$ was reported, where $\sigma_h$ is the $L^2(\Omega^\scU)$ ultraweak stress solution variable inside the rubber and $\sigma_h=\sfC\!:\!\nabla u_h$ inside the steel, with $u_h$ being the $H^1(\Omega^\scP)$ primal displacement solution variable.
Order $p$ convergence rates were expected for order $p$ discretizations as stipulated by Remark~\ref{rmk:ExactSequence}, because $\|\sigma-\sigma_h\|_{L^2(\Omega)}\leq\|\sigma-\sigma_h\|_{L^2(\Omega^\scU)}+\|\sfC\|\|u-u_h\|_{H^1(\Omega^\scP)}$.
This was corroborated numerically under uniform refinements for $2\leq p\leq 4$ as observed in Figure~\ref{fig:high-material-contrast-uniform-error}.


\subsubsection{Nonuniform pressure distribution}
\label{sec:nonuniformpressure}


Lastly, a nonuniform internal pressure distribution of $p_{\mathrm{in}}(\theta)=\cos^2(\theta)\,\mathrm{MPa}$ was prescribed on the inside, while the the external pressure was uniformly kept at $p_\mathrm{out}=0\,\mathrm{MPa}$.
After a few uniform refinements the solution is displayed in Figure~\ref{fig:NonuniformPressure} in each separate layer.
Note that the discontinuity of the stress component, $\sigma_{\theta\theta}$, which is useful in some applications, was amicably reproduced.

\begin{figure}[!ht]
 	\centering
  \includegraphics[trim=0cm 0cm 0cm 0cm,clip=true,scale=0.85]{./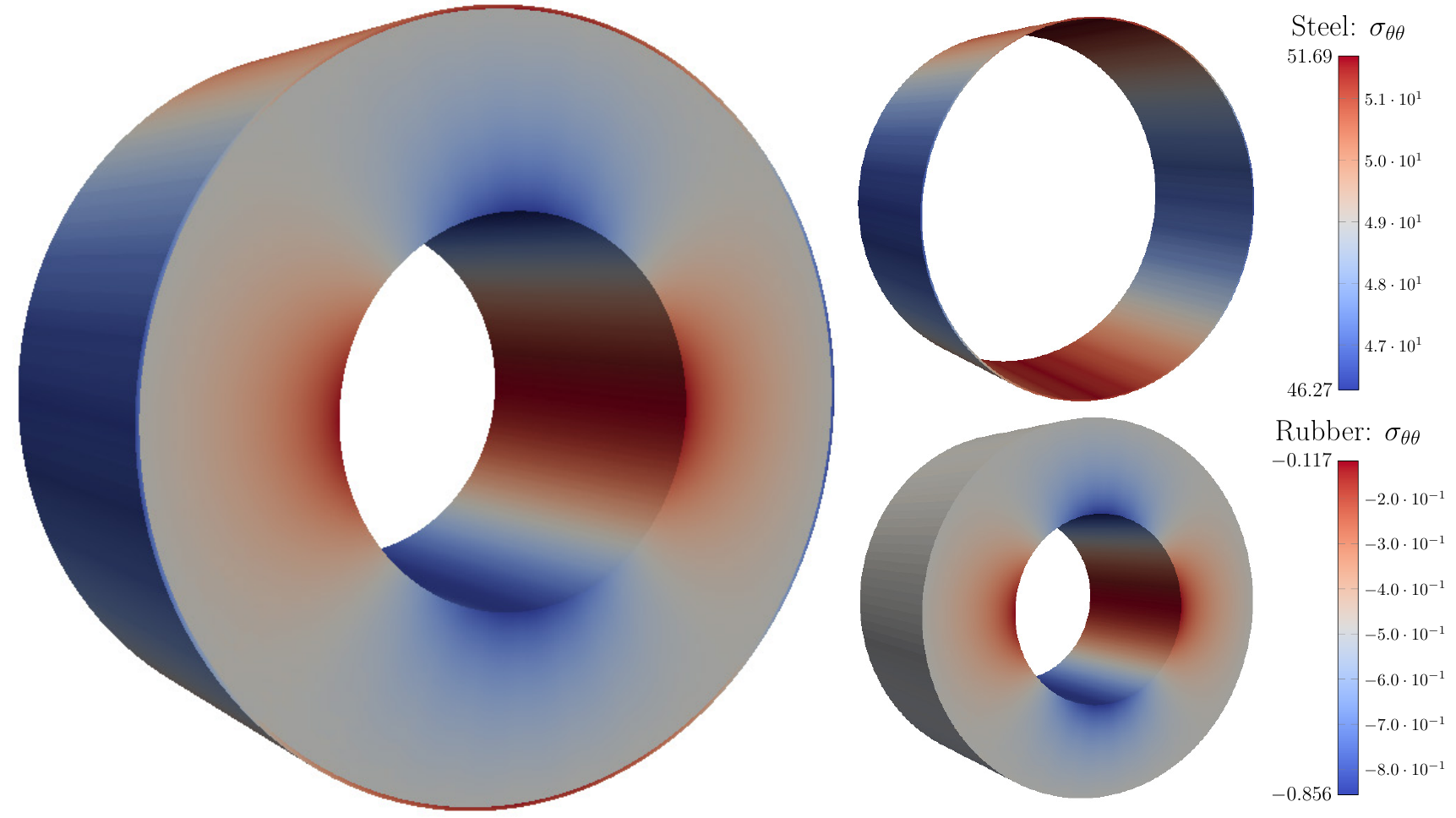}
 	\caption{Stress component $\sigma_{\theta\theta}$ (in $\mathrm{MPa}$) from computed solution with $p = 2$ and nonuniform internal pressure loading after three uniform refinements of the eight-element initial mesh. Note the discontinuity across the material interface.}
 	\label{fig:NonuniformPressure}
\end{figure}

\subsection{Discussion}
\label{sec:discussion}

An important point to emphasize about solving coupled formulations with the DPG methodology is that depending on the subdomain formulation, the mode of convergence, which is given by the minimization of the residual (see \eqref{eq:MinResidual}), is different because the residual is directly related to the bilinear form along with its associated trial and broken test spaces and their corresponding norms.
Hence, there is a potential subdomain bias in the convergence of the solution variables depending on the variational formulations associated to each subdomain.
This bias is even more stark when having a multi-material domain, which expectedly introduces different scales throughout the domain.
If the focus is to be centered around optimality in solution estimation, as research in the DPG methodology often has \cite{chan2014robust,demkowicz2011class,demkowicz2013robust}, then this subdomain bias in the convergence is a crucial aspect to ponder.
Indeed, it directly affects the (local) residual that is used as an a posteriori error estimator to drive adaptivity (see \eqref{eq:DiscreteResidual}).
The bias itself is not necessarily undesirable, since it may align with preferences by the end user, but it may be important to understand and modify so that it further aligns with those preferences.
For example, in the sheathed hose one might want to prioritize the values of stress in the steel over those in the rubber, or the values of strain in the rubber over those in the steel.
Thus, the ideal scenario is to be able to control the bias in accordance with a desired objective.
With this in mind, the lucid approach is to attempt to design \textit{objective-biased} test norms that satisfy this very purpose.
In this work, the use of the standard test norms in \eqref{eq:BrokenSpacesAndNorms} introduced a natural yet unoptimized subdomain bias, but designing these objective-biased norms would most certainly constitute an improvement to the scope of the present work.

Another remark to make is that the method used is akin to domain-decomposition methods and may even serve as an alternative.
In fact, many steps of the current method can be made parallel too, and the resulting connectivities along the interdomain boundaries are of a similar nature as those of other domain-decomposition methods.
Conceptually, compared to domain-decomposition methods, the use of the DPG methodology in the current method has the advantage of providing a solid ground of theory which practically guarantees stability and convergence of the solution with successive refinements, but has the disadvantage of possibly coming at a slightly higher computational cost.
In the future, it would be interesting to more rigorously investigate these connections with domain-decomposition methods.
Lastly, the interface variables in the broken formulations are not only natural to couple distinct formulations, but also suggest a natural way to couple with entirely different finite element methods or with boundary element methods.
{
The latter has already been further investigated in the context of elliptic transmission problems \cite{HeuerFuhrerCoupling,fuhrer2017coupling}.
}
\section{Conclusions}
\label{sec:Conclusion}

Coupled variational formulations for linear elasticity were constructed using a closely related family of broken variational formulations first presented in \cite{Keith2016}, where each subdomain of a partitioned domain was solved with a distinct formulation from this family.
The broken variational formulations that are commonplace in the context of the DPG methodology proved to be ideal in the theory and practice of the coupled formulations due to the presence of interface variables which served as a perfect vessel to transmit the solution information along the shared interdomain boundaries.
Indeed, the coupled formulations were proved to be well-posed and were successfully implemented and solved using the DPG methodology.
Expected convergence rates for various values of $p$ were observed for different variables in several well-crafted examples.

The coupled formulations are useful in cases where one might want to exploit the properties of a particular formulation in a certain part of the domain.
For example, it is useful to have a robust formulation when a part of the domain is composed of a nearly incompressible material, and certain formulations are more convenient when singular behavior of the solution is expected.
In this work, an example of a sheathed hose with high material contrast was used to illustrate the former point.
This included the derivation of a nontrivial and physically-relevant exact solution which can be used as a benchmark by other researchers.
Regarding the near singular behavior in the latter point, it would be interesting to study some examples with Maxwell's equations in the future.
The mode of convergence is different depending on which variational formulation is being used, so to obtain a desired convergence behavior or to have a better control of a residual-based adaptivity strategy, it would be compelling to explore more exotic test norms eventually.
Additionally, from both a theoretical and practical standpoint, the approach presented in this work can be extended with the help of the existing literature to many other equations such as Poisson's equation, Maxwell's equations \cite{BrokenForms15}, the diffusion-convection-reaction equation \cite{DemkowiczClosedRange}, and more.
It should also be noted that the implementation of these coupled formulations can be useful as an alternative to other domain-decomposition methods.

Finally, the DPG methodology was fundamental in achieving a successful implementation, in large part because of its versatility in displaying stability with a variety of distinct variational formulations, including those that have different trial and test spaces.
In this work, a novel and particularly productive decoupling of the stiffness matrix and residual computation was described (see \eqref{eq:CholeskyStiffness} and \eqref{eq:DiscreteResidual}), allowing for a considerably more efficient computational implementation of some of the linear algebra operations that accompany the DPG methodology.




\paragraph{Acknowledgements.}
This work was partially supported with grants by NSF (DMS-1418822),  AFOSR (FA9550-12-1-0484), and ONR (N00014-15-1-2496).
The authors would like to thank Dr.~Jean-Luc Cambier from AFOSR for raising the question on whether different DPG formulations could be applied in different subdomains and yield a globally coupled well-posed variational problem.


\phantomsection
\addcontentsline{toc}{section}{References}
\bibliographystyle{apalike}
\bibliography{main}

\begin{thebibliography}{}

\bibitem[Arnold et~al., 2008]{elas3dfamily}
Arnold, D.~N., Awanou, G., and Winther, R. (2008).
\newblock Finite elements for symmetric tensors in three dimensions.
\newblock {\em Math. Comput.}, 77:1229--1251.

\bibitem[Arnold et~al., 2007]{mixedelas3d}
Arnold, D.~N., Falk, R.~S., and Winther, R. (2007).
\newblock Mixed finite element methods for linear elasticity with weakly
  imposed symmetry.
\newblock {\em Math. Comput.}, 76:1699--1723.

\bibitem[Babu\v{s}ka, 1971]{babuska1971error}
Babu\v{s}ka, I. (1971).
\newblock Error-bounds for finite element method.
\newblock {\em Numer. Math.}, 16(4):322--333.

\bibitem[Bramwell et~al., 2012]{Bramwell12}
Bramwell, J., Demkowicz, L., Gopalakrishnan, J., and Weifeng, Q. (2012).
\newblock A locking-free $hp$ {DPG} method for linear elasticity with symmetric
  stresses.
\newblock {\em Numer. Math.}, 122(4):671--707.

\bibitem[Brezzi, 1974]{brezzi1974existence}
Brezzi, F. (1974).
\newblock On the existence, uniqueness and approximation of saddle-point
  problems arising from {L}agrangian multipliers.
\newblock {\em RAIRO Anal. Num\'{e}r.}, 8(2):129--151.

\bibitem[Brezzi and Fortin, 1991]{brezzi2012mixed}
Brezzi, F. and Fortin, M. (1991).
\newblock {\em Mixed and Hybrid Finite Element Methods}, volume~15 of {\em
  Springer Series in Computational Mathematics}.
\newblock Springer, New York.

\bibitem[Bui-Thanh et~al., 2013]{bui2013unified}
Bui-Thanh, T., Demkowicz, L., and Ghattas, O. (2013).
\newblock A unified discontinuous {P}etrov--{G}alerkin method and its analysis
  for {F}riedrichs' systems.
\newblock {\em SIAM J. Numer. Anal.}, 51(4):1933--1958.

\bibitem[Carstensen et~al., 2014]{carstensen2014posteriori}
Carstensen, C., Demkowicz, L., and Gopalakrishnan, J. (2014).
\newblock A posteriori error control for {DPG} methods.
\newblock {\em SIAM J. Numer. Anal.}, 52(3):1335--1353.

\bibitem[Carstensen et~al., 2016]{BrokenForms15}
Carstensen, C., Demkowicz, L., and Gopalakrishnan, J. (2016).
\newblock Breaking spaces and forms for the {DPG} method and applications
  including {M}axwell equations.
\newblock {\em Comput. Math. Appl.}, 72(3):494--522.

\bibitem[Chan et~al., 2014]{chan2014robust}
Chan, J., Heuer, N., Bui-Thanh, T., and Demkowicz, L. (2014).
\newblock A robust {DPG} method for convection-dominated diffusion problems
  {II}: {A}djoint boundary conditions and mesh-dependent test norms.
\newblock {\em Comput. Math. Appl.}, 67(4):771--795.

\bibitem[Ciarlet, 1988]{Ciarlet93}
Ciarlet, P.~G. (1988).
\newblock {\em Mathematical Elasticity. Volume 1: Three Dimensional
  Elasticity}.
\newblock North-Holland, Amsterdam.

\bibitem[Ciarlet, 2002]{ciarlet2002finite}
Ciarlet, P.~G. (2002).
\newblock {\em The Finite Element Method for Elliptic Problems}.
\newblock Classics in Applied Mathematics. Society for Industrial and Applied
  Mathematics.

\bibitem[Cockburn and Fu, 2016]{Cockburn16}
Cockburn, B. and Fu, G. (2016).
\newblock A systematic construction of finite element commuting exact
  sequences.
\newblock {\em ArXiv e-prints}, arXiv:1605.00132 [math.NA].

\bibitem[Demkowicz, 2006]{hpbook}
Demkowicz, L. (2006).
\newblock {\em Computing with $hp$ Finite Elements. I. One and Two Dimensional
  Elliptic and Maxwell Problems}.
\newblock Chapman \& Hall/CRC Press, New York.

\bibitem[Demkowicz, 2015]{DemkowiczClosedRange}
Demkowicz, L. (2015).
\newblock Various variational formulations and closed range theorem.
\newblock ICES Report 15-03, The University of Texas at Austin.

\bibitem[Demkowicz and Gopalakrishnan, 2010]{demkowicz2010class}
Demkowicz, L. and Gopalakrishnan, J. (2010).
\newblock A class of discontinuous {P}etrov--{G}alerkin methods. {P}art {I}:
  {T}he transport equation.
\newblock {\em Comput. Methods Appl. Mech. Engrg.}, 199(23-24):1558--1572.

\bibitem[Demkowicz and Gopalakrishnan, 2011]{demkowicz2011class}
Demkowicz, L. and Gopalakrishnan, J. (2011).
\newblock A class of discontinuous {P}etrov--{G}alerkin methods. {II}.
  {O}ptimal test functions.
\newblock {\em Numer. Methods Partial Differential Equations}, 27(1):70--105.

\bibitem[Demkowicz and Gopalakrishnan, 2014]{DPGOverview}
Demkowicz, L. and Gopalakrishnan, J. (2014).
\newblock An overview of the {DPG} method.
\newblock In Feng, X., Karakashian, O., and Xing, Y., editors, {\em Recent
  Developments in Discontinuous Galerkin Finite Element Methods for Partial
  Differential Equations}, volume 157 of {\em The {IMA} Volumes in Mathematics
  and its Applications}, pages 149--180. Springer.

\bibitem[Demkowicz and Heuer, 2013]{demkowicz2013robust}
Demkowicz, L. and Heuer, N. (2013).
\newblock Robust {DPG} method for convection-dominated diffusion problems.
\newblock {\em SIAM J. Numer. Anal.}, 51(5):2514--2537.

\bibitem[Demkowicz et~al., 2007]{hpbook2}
Demkowicz, L., Kurtz, J., Pardo, D., Paszy\'{n}ski, M., Rachowicz, W., and
  Zdunek, A. (2007).
\newblock {\em Computing with $hp$ Finite Elements. II. Frontiers: Three
  Dimensional Elliptic and Maxwell Problems with Applications}.
\newblock Chapman \& Hall/CRC, New York.

\bibitem[Demkowicz, 2008]{demkowicz2008polynomial}
Demkowicz, L.~F. (2008).
\newblock Polynomial exact sequences and projection-based interpolation with
  application to {M}axwell equations.
\newblock In Boffi, D. and Gastaldi, L., editors, {\em Mixed Finite Elements,
  Compatibility Conditions, and Applications}, volume 1939 of {\em Lecture
  Notes in Mathematics}, pages 101--158. Springer, Berlin.

\bibitem[Falk, 2008]{falk2008}
Falk, R.~S. (2008).
\newblock Finite element methods for linear elasticity.
\newblock In Boffi, D. and Gastaldi, L., editors, {\em Mixed Finite Elements,
  Compatibility Conditions, and Applications}, volume 1939 of {\em Lecture
  Notes in Mathematics}, pages 159--194. Springer, Berlin.

\bibitem[Fuentes et~al., 2015]{Fuentes2015}
Fuentes, F., Keith, B., Demkowicz, L., and Nagaraj, S. (2015).
\newblock Orientation embedded high order shape functions for the exact
  sequence elements of all shapes.
\newblock {\em Comput. Math. Appl.}, 70(4):353--458.

\bibitem[F\"{u}hrer and Heuer, ress]{HeuerFuhrerCoupling}
F\"{u}hrer, T. and Heuer, N. (in press).
\newblock Robust coupling of {DPG} and {BEM} for a singularly perturbed
  transmission problem.
\newblock {\em Comput. Math. Appl.}

\bibitem[F{\"u}hrer et~al., 2017]{fuhrer2017coupling}
F{\"u}hrer, T., Heuer, N., and Karkulik, M. (2017).
\newblock On the coupling of {DPG} and {BEM}.
\newblock {\em Math. Comp.}, 86(307):2261--2284.

\bibitem[Gopalakrishnan and Qiu, 2014]{gopalakrishnan2014analysis}
Gopalakrishnan, J. and Qiu, W. (2014).
\newblock An analysis of the practical {DPG} method.
\newblock {\em Math. Comp.}, 83(286):537--552.

\bibitem[Heuer and Karkulik, 2015]{HeuerKarkulikTransmission}
Heuer, N. and Karkulik, M. (2015).
\newblock {DPG} method with optimal test functions for a transmission problem.
\newblock {\em Comput. Math. Appl.}, 70(5):1070--1081.

\bibitem[Hu, 2015]{JunHuStrongSymmetry}
Hu, J. (2015).
\newblock A new family of efficient conforming mixed finite elements on both
  rectangular and cuboid meshes for linear elasticity in the symmetric
  formulation.
\newblock {\em SIAM J. Numer. Anal.}, 53(3):1438--1463.

\bibitem[Hughes, 1987]{HughesFamous}
Hughes, T. J.~R. (1987).
\newblock {\em The Finite Element Method: Linear Static and Dynamic Finite
  Element Analysis}.
\newblock Prentice-Hall, Englewood Cliffs, NJ.

\bibitem[Keith et~al., 2016]{Keith2016}
Keith, B., Fuentes, F., and Demkowicz, L. (2016).
\newblock The {DPG} methodology applied to different variational formulations
  of linear elasticity.
\newblock {\em Comput. Methods Appl. Mech. Engrg.}, 309:579--609.

\bibitem[Oden and Reddy, 1983]{Oden76}
Oden, J.~T. and Reddy, J.~N. (1983).
\newblock {\em Variational Methods in Theoretical Mechanics}.
\newblock Universitext. Springer-Verlag, Berlin, 2nd edition.

\bibitem[Roberts, 2014]{roberts2014camellia}
Roberts, N.~V. (2014).
\newblock Camellia: A software framework for discontinuous {P}etrov--{G}alerkin
  methods.
\newblock {\em Comput. Math. Appl.}, 68(11):1581--1604.

\bibitem[Roberts et~al., 2015]{roberts2015discontinuous}
Roberts, N.~V., Demkowicz, L., and Moser, R. (2015).
\newblock A discontinuous {P}etrov--{G}alerkin methodology for adaptive
  solutions to the incompressible {N}avier--{S}tokes equations.
\newblock {\em J. Comput. Phys.}, 301:456--483.

\bibitem[Schneidesch et~al., 1994]{DomainDecomMethods}
Schneidesch, C.~R., Deville, M.~O., and Mund, E.~H. (1994).
\newblock Domain decomposition method coupling finite elements and
  preconditioned {C}hebyshev collocation to solve elliptic problems.
\newblock In Quarteroni, A., P\'{e}riaux, J., Kuznetsov, Y.~A., and Widlund,
  O.~B., editors, {\em Domain Decomposition Methods in Science and
  Engineering}, volume 157 of {\em Contemporary Mathematics}, pages 293--298.
  American Mathematical Society, USA.

\end{thebibliography}


\end{document}